\setlist[enumerate]{font={\upshape}, label=\arabic*., leftmargin=2.5em}
\setlist[itemize]{leftmargin=2.5em}
\setlist[description]{leftmargin=\parindent, 
	itemsep=3pt
}
\newlist{equivlist}{enumerate}{1}
\setlist[equivlist]{font={\upshape}, label=(\roman*)}
\tikzset{ 
	table/.style={
		matrix of nodes,
		nodes={rectangle,text width=1.75em,align=center},
		text depth=1.25ex,
		text height=2.5ex,
		nodes in empty cells
	}
}
\newcommand\doverline[1]{%
	\tikz[baseline=(nodeAnchor.base)]{
		\node[inner sep=0] (nodeAnchor) {$#1$}; 
		\draw[line width=0.1ex,line cap=round] 
		($(nodeAnchor.north west)+(0.0em,0.2ex)$) 
		--
		($(nodeAnchor.north east)+(0.0em,0.2ex)$) 
		($(nodeAnchor.north west)+(0.0em,0.5ex)$) 
		--
		($(nodeAnchor.north east)+(0.0em,0.5ex)$) 
		;
}}
\newtheorem{theorem}{Theorem}[section]
\newtheorem{lemma}[theorem]{Lemma}
\newtheorem{claim}[theorem]{Claim}
\newtheorem*{claimu}{Claim}
\Crefname{claim}{Claim}{Claims}
\newlist{lemlist}{enumerate}{1}
\setlist[lemlist]{font={\upshape}, label={\upshape(\alph*)},ref={\thelemma(\alph*)},leftmargin=*}
\newtheorem{conjecture}[theorem]{Conjecture}
\Crefname{conjecture}{Conjecture}{Conjectures}
\let\expandafter\oldproof\csname\string\proof\endcsname
\let\oldendproof\endproof
\renewenvironment{proof}[1][\proofname]{%
	\oldproof[\normalfont\bfseries #1]%
}{\oldendproof}
\newenvironment{subproof}[1][\normalfont\it Proof]{%
	\begin{proof}[#1]%
	}{%
	\end{proof}%
}
\newcommand{\dd}{\textquotedblleft}
\newcommand{\ee}{\textquotedblright}
\newcommand{\mac}{\mathcal}
\newcommand{\eps}{\varepsilon}
\newcommand{\nin}{\notin}
\renewcommand{\subset}{\subseteq}
\newcommand{\erh}{Erd\H{o}s-Hajnal}
\DeclarePairedDelimiter\abs{\lvert}{\rvert}%
\DeclarePairedDelimiter\ceil{\lceil}{\rceil}%
\DeclarePairedDelimiter\floor{\lfloor}{\rfloor}%
\newcommand{\leqnomode}{\tagsleft@true}
\newcommand{\reqnomode}{\tagsleft@false}
\date{November 13, 2023; revised \today}
\begin{document}
	\title{Induced subgraph density. VI. Bounded VC-dimension}
	\author{Tung Nguyen}
	\address{Princeton University, Princeton, NJ 08544, USA}
	\email{\href{mailto:tunghn@math.princeton.edu}
		{tunghn@math.princeton.edu}}
	\author{Alex Scott}
	\address{Mathematical Institute,
		University of Oxford,
		Oxford OX2 6GG, UK}
	\email{\href{mailto:scott@maths.ox.ac.uk}{scott@maths.ox.ac.uk}}
	\author{Paul Seymour}
	\address{Princeton University, Princeton, NJ 08544, USA}
	\email{\href{mailto:pds@math.princeton.edu}{pds@math.princeton.edu}}
	\thanks{The first and the third authors were partially supported by AFOSR grant FA9550-22-1-0234 and NSF grant DMS-2154169.
		The second author was supported by EPSRC grant EP/X013642/1.}
	\begin{abstract}
		We confirm a conjecture of Fox, Pach, and Suk, that for every $d>0$, there exists $c>0$ such that every $n$-vertex graph of VC-dimension at most $d$ has a clique or stable set of size at least~$n^c$.
		This implies that, in the language of model theory, every graph definable in NIP structures has a clique or anti-clique
  of polynomial size, settling a conjecture of Chernikov, Starchenko,~and Thomas.
  
		Our result also implies that every two-colourable tournament satisfies the tournament version of the \erh{} conjecture, which completes the verification of the conjecture for six-vertex tournaments.
		The result extends to uniform hypergraphs of bounded VC-dimension as well.
		
		The proof method uses the ultra-strong regularity lemma for graphs of bounded VC-dimension proved by 
		Lov\'asz and Szegedy and the method of iterative sparsification introduced by the authors in an earlier paper.
	\end{abstract}
	\maketitle
	\section{Introduction}
	All (hyper)graphs in this paper are finite and simple.
	A graph $H$ is an {\em induced subgraph} of a graph $G$ if $H$ can be obtained from $G$ by removing vertices.
	A class $\mac C$ of graphs is {\em hereditary} if it is closed under taking induced subgraphs and under isomorphism;
	and a hereditary class $\mac C$ is {\em proper} if it is not the class of all graphs.
	We say that $\mac C$ has the {\em \erh{} property} 
	if there exists $c>0$ such that every graph $G\in \mac C$ has a clique or stable set of size at least $|G|^c$, where $|G|$
	denotes the number of vertices of $G$.
	A conjecture of Erd\H os and Hajnal~\cite{MR1031262,MR599767} (see~\cite{MR1425208,MR3150572} for surveys and~\cite{MR4563865,density1,density4,density7} for some recent partial results) asserts that:
	\begin{conjecture}
		\label{conj:eh}
		Every proper hereditary class of graphs has the \erh{} property.
	\end{conjecture}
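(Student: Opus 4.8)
The plan is to attack \Cref{conj:eh} through its usual reformulation in terms of excluded induced subgraphs. If $\mac C$ is proper then some graph $H$ satisfies $H\notin\mac C$, and since $\mac C$ is hereditary no member of $\mac C$ contains $H$ as an induced subgraph; thus $\mac C$ is contained in $\mathrm{Forb}(H)$, the class of all graphs with no induced copy of $H$. As the \erh{} property is clearly inherited by subclasses, it is enough to prove that $\mathrm{Forb}(H)$ has the \erh{} property for every fixed graph $H$.

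Next I would reduce to the case where $H$ is \emph{prime}, i.e.\ has no module other than $\emptyset$, $V(H)$, and singletons. The key tool is the substitution operation together with a theorem of Alon, Pach, Pach, Sharir, and Solymosi: if $H$ is obtained from a graph $H_1$ by substituting a copy of $H_2$ at a vertex, and both $\mathrm{Forb}(H_1)$ and $\mathrm{Forb}(H_2)$ have the \erh{} property, then so does $\mathrm{Forb}(H)$. Applying Gallai's modular decomposition to $H$ and recursing, every graph is built by repeated substitution from prime graphs and from the one- and two-vertex graphs; the latter give trivially \erh{} classes ($K_2$-free graphs are edgeless, $\overline{K_2}$-free graphs are complete), so \Cref{conj:eh} reduces to showing $\mathrm{Forb}(H)$ has the \erh{} property for prime $H$.

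For prime $H$ I would aim for the stronger conclusion conjectured by Fox and Sudakov: there is $\delta=\delta(H)>0$ such that every $n$-vertex $H$-free graph $G$ has disjoint sets $A,B\subseteq V(G)$ with $|A|,|B|\ge\delta n$ and $A$ either complete or anticomplete to $B$. A homogeneous pair of linear size can be turned into a clique or stable set of size $n^{c(H)}$ by a routine recursion. To produce such a pair I would pass to a regularity partition of $V(G)$ into boundedly many near-equal, pairwise-regular parts, form the reduced graph whose edges record density near $0$ or near $1$, and argue that unless this reduced structure is homogeneous on a positive proportion of pairs one can embed $H$ greedily across regular pairs following the density pattern of $H$; this is exactly the route that succeeds for special excluded graphs such as complete bipartite graphs or disjoint unions of cliques.

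The main obstacle is purely quantitative, and it is precisely where the present paper intervenes. Szemer\'edi-type regularity forces a tower-type number of parts, so the homogeneous pair found above has size only $n/\mathrm{tower}$ --- comparable to the $2^{c\sqrt{\log n}}$ bound of Erd\H os and Hajnal, and far too weak to iterate to a polynomial bound --- and no combinatorial method is known to repair this for an arbitrary excluded $H$; this is exactly why \Cref{conj:eh} is open in general. Under the extra hypothesis of bounded VC-dimension, however, the ultra-strong regularity lemma of Lov\'asz and Szegedy supplies regular partitions into only polynomially many parts with constant error, and feeding these into the iterative sparsification method of the authors converts the weak homogeneous pair into one of linear size, and hence into a clique or stable set of polynomial size. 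So the honest status of the plan is: the two reductions above are unconditional, but the regularity step can currently be carried out only for classes of bounded VC-dimension, which is the setting in which this paper establishes the \erh{} property.
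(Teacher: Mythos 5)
The statement you are asked to prove is \cref{conj:eh} itself, which is an \emph{open conjecture}: the paper offers no proof of it and proves only the special case of graphs of bounded VC-dimension (\cref{thm:vc}). Your proposal is candid that it is a plan rather than a proof, and indeed it does not close the gap: the regularity step you describe cannot currently be carried out for an arbitrary excluded graph $H$, which is exactly why the conjecture remains open. Your two unconditional reductions are standard and correct (restriction to $\mathrm{Forb}(H)$, and the substitution/modular-decomposition reduction to prime $H$, due to Alon, Pach, and Solymosi --- note the garbled author list in your write-up), but they do not constitute progress beyond the known state of the art.

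There is also a structural flaw in the plan beyond the admitted quantitative gap. For prime $H$ you aim for the ``strong'' conclusion that every $H$-free graph has two disjoint linear-size sets that are complete or anticomplete to each other. This target is \emph{false} for most prime $H$: it is known (Chudnovsky, Scott, Seymour, and Spirkl, building on R\"odl) that the class of $H$-free graphs admits linear-size pure pairs only when $H$ or $\overline H$ is a forest; for example $C_5$ is prime, neither it nor its complement is a forest, and suitably sparse random graphs are $C_5$-free with no pure pair of linear size. So even if the regularity difficulties were resolved, the intermediate statement you are driving toward cannot hold for general prime $H$. The correct weakening to aim for is the polynomial R\"odl property (\cref{conj:fs}), i.e.\ an $\eps$-restricted induced subgraph on a $\operatorname{poly}(\eps)$ fraction of the vertices rather than a pure pair --- this is precisely the formulation the paper adopts and establishes in the bounded VC-dimension setting via iterative sparsification. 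As it stands, your proposal neither proves \cref{conj:eh} nor matches the paper's (nonexistent) proof; it should be presented as a survey of the reduction steps and of where the difficulty lies, not as a proof.
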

	For a set $\mac F$ of subsets of a set $V$, a subset $S$ of $V$ is {\em shattered} by $\mac F$ if for every $A\subset S$ there exists $B\in\mac F$ with $B\cap S=A$.
	The {\em VC-dimension} of $\mac F$ (introduced by Vapnik and Chervonenkis in~\cite{MR3408730}) is the largest cardinality of a subset of $V$ that is shattered by $\mac F$.
	Since its introduction in 1971, the notion of VC-dimension has proved to be relevant in a number of areas of pure and applied mathematics.
	The {\em VC-dimension} of a graph $G$ is the VC-dimension of the set $\{N_G(v):v\in V(G)\}$ of subsets of $V(G)$, where $N_G(v)$ denotes the set of all neighbours of $v$ (not including $v$ itself).
	It is not hard to see that for every $d\ge1$, the class of graphs of VC-dimension at most $d$ is a proper hereditary class.
	The aim of this paper is to confirm a conjecture of Fox, Pach, and Suk~\cite{MR3943496} that every hereditary class of graphs of bounded VC-dimension has the \erh{} property;
	in their paper, they came close to settling this by proving a bound of $2^{(\log n)^{1-o(1)}}$ where the constant depending on the VC-dimension is hidden in the $o(1)$ term. (In this paper $\log$ denotes the binary logarithm.) Our result is:
	\begin{theorem}
		\label{thm:vc}
		For every $d\ge1$, there exists $c_d>0$ such that every graph $G$ of $VC$-dimension at most $d$ contains a clique or stable set of size at least $|G|^{c_d}$.
	\end{theorem}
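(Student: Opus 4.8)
Instead of \cref{thm:vc} directly I would prove the stronger statement that $g(n):=\min\{\omega(G)\alpha(G): G\text{ has VC-dimension }\le d,\ |G|=n\}\ge n^{c}$ for some $c=c(d)>0$, which gives a clique or stable set of size $n^{c/2}$. The reason for working with the product $\omega\alpha$ is that it yields a clean recursion: if $V(G)$ can be partitioned into nonempty parts $X_1,\dots,X_t$ that are pairwise complete, then $\omega(G)\ge\sum_i\omega(G[X_i])$ and $\alpha(G)\ge\max_i\alpha(G[X_i])$, so by Cauchy--Schwarz
\[
\omega(G)\alpha(G)\ \ge\ \Big(\sum_i\omega(G[X_i])\Big)\max_i\alpha(G[X_i])\ \ge\ \tfrac1t\Big(\sum_i\omega(G[X_i])\Big)\Big(\sum_i\alpha(G[X_i])\Big)\ \ge\ \tfrac1t\Big(\sum_i\sqrt{\omega(G[X_i])\alpha(G[X_i])}\Big)^2\ \ge\ t\cdot\min_i g(|X_i|),
\]
and the pairwise-anticomplete case is the same with $\omega$ and $\alpha$ swapped. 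Graphs of VC-dimension $\le d$ form a hereditary class, so the inductive hypothesis applies to each $G[X_i]$; hence any partition of this kind with $t\ge2$ and all $|X_i|=\Theta_d(n)$ (say $(1\pm o(1))n/K$ for a fixed $K=K(d)$) gives $g(n)\ge t\cdot g((1-o(1))n/K)$, which unwinds in $O_d(\log n)$ steps to a polynomial bound $g(n)\ge n^{c}$.

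\smallskip
\noindent\textbf{Producing the partition.} To find such a partition I would apply the ultra-strong regularity lemma of Lov\'asz and Szegedy to $G$ with a small constant $\eps=\eps(d)$: it partitions $V(G)$ into $K\le\eps^{-O(d)}$ nearly equal parts $V_1,\dots,V_K$, all pairs $\eps$-regular, with all but $\eps K^2$ of the pairs of edge-density outside $(\eps,1-\eps)$. Deleting an endpoint of each exceptional pair leaves (by Tur\'an) $\ge\tfrac1{3\eps}$ parts with no exceptional pair among them, and Ramsey's theorem applied to the resulting constant-size reduced graph then gives a set $I$ of $t:=\lceil\tfrac12\log\tfrac1{3\eps}\rceil\ge2$ indices with the parts $V_i$ $(i\in I)$ pairwise $\eps$-complete or pairwise $\eps$-anticomplete; each $G[V_i]$ has VC-dimension $\le d$ and $\Theta(n/K)$ vertices. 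Were these blocks exactly complete (resp.\ anticomplete), the recursion above would close immediately.

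\smallskip
\noindent\textbf{The main obstacle.} The difficulty, and the step where iterative sparsification enters, is that the blocks are only $\eps$-homogeneous: a pair of them may carry up to $\eps(n/K)^2$ wrong adjacencies, and then $\omega$ (or $\alpha$) need not add up over the blocks. A one-shot clean-up fails, because the cliques and stable sets produced inside the blocks by the inductive hypothesis have only polynomial size $(n/K)^{c}$, far below the linear scale $\eps\cdot(n/K)$ at which regularity, or even a crude density count, gives control; removing the bad vertices of one pair just produces new bad vertices for others. I would instead run an iterative sparsification on all $\binom{|I|}{2}$ block-pairs at once: repeatedly locate the vertices currently violating homogeneity, delete them together with their influence on the other pairs, and recurse on the shrunken blocks, invoking the bounded VC-dimension to bound the number of rounds and to show that only an $o(1)$-fraction of each block is lost. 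The outcome is blocks $X_i\subseteq V_i$ of size $\Theta(n/K)$ that are homogeneous enough that a maximum clique (resp.\ stable set) of $G$ can be assembled, up to a $(1-o(1))$ factor, from one in each $X_i$; plugging this into the recursion yields $g(n)\ge(1-o(1))\,t\cdot g(n/K)$ and hence $g(n)\ge n^{c}$ with $c=c(d)>0$ (the base case being trivial). I expect essentially all the work to lie in making this sparsification and its loss-accounting rigorous — in particular in tying the per-round deletion budget to $d$ and to the constants $\eps,K,t$ — while the reduction to a partition, the Ramsey step, and the Cauchy--Schwarz recursion are routine. The hypergraph version should follow the same way, using the ultra-strong regularity lemma for uniform hypergraphs of bounded VC-dimension in place of Lov\'asz--Szegedy.
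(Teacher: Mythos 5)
Your plan correctly identifies both the right starting point (the Lov\'asz--Szegedy ultra-strong regularity lemma) and the crucial difficulty — the scale mismatch between polynomial-size cliques and stable sets inside each block (size $\sim (n/K)^c$) and the linear scale ($\sim\eps n/K$) at which weak purity gives any control. The Cauchy--Schwarz recursion on $\omega\alpha$ is a legitimate substitute for the cograph recursion used in the paper, and both would close the argument if one had a \emph{complete or anticomplete} blockade.

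The genuine gap is in how you propose to bridge that scale mismatch. You suggest running a cleanup procedure ("repeatedly locate the vertices currently violating homogeneity, delete them\ldots") and argue that bounded VC-dimension ensures only an $o(1)$-fraction of each block is lost, yielding blocks homogeneous enough that maximum cliques and stable sets glue across blocks up to a $(1-o(1))$ factor. This cannot work as stated. In a weakly $\eps$-sparse pair $(X_i,X_j)$ it is entirely possible — even under bounded VC-dimension — that every vertex of $X_i$ has $\Theta(\eps|X_j|)$ neighbours in $X_j$ (take, e.g., the bipartite graph with $x_k y_\ell\in E$ iff $|k-\ell|\le\eps m$, which is an interval-type pattern of VC-dimension $1$). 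Then no deletion of an $o(1)$-fraction of vertices makes the pair anticomplete; every vertex remains "violating". Worse, with $\omega,\alpha$ only polynomially large, nothing forbids a stable set $S_i\subseteq X_i$ from being \emph{complete} to a stable set $S_j\subseteq X_j$ even though $(X_i,X_j)$ is weakly $\eps$-anticomplete, so cliques and stable sets produced by the inductive hypothesis simply do not glue under weak purity. The bounded-VC assumption controls the number of distinct neighbourhood patterns, not the minimum degree into a block, so it gives no handle on the cleanup you need.

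The paper's resolution is structurally different and is worth absorbing. It does not try to make the blockade exactly pure, and the phrase "iterative sparsification" there refers to something else: passing through a chain of induced subgraphs, each successively more $y$-restricted (smaller relative maximum degree), via \cref{lem:sparsify}. The heart of the matter is \cref{lem:vc1}: inside a graph $F$ that is already $y$-\emph{sparse}, apply the ultra-strong regularity lemma to get a weakly pure blockade $(B_1,\dots,B_\ell)$; then sparsity guarantees that at most a $y$-fraction of the remaining vertices can be \emph{complete} to any given block $B_i$, and a reduced-graph argument (using the inductive Erd\H{o}s--Hajnal hypothesis for $H\setminus v$ and the absence of a bi-induced $H$) shows that few vertices are \emph{mixed} on many blocks. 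Choosing $B_i$ well yields an exactly anticomplete pair $(B_i,B)$ with $|B|\ge(1-3y)|F|$, and iterating (\cref{lem:vc2}) builds an exactly anticomplete blockade. Failure to find such a blockade forces a strictly more restricted subgraph, and \cref{lem:sparsify} pushes this down to the target $\eps$. In short, the way to convert weak purity into exact purity is not to clean up a fixed blockade but to first drive the whole graph to near-zero density and then exploit sparsity; that is the idea your proposal is missing.
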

	
	(A careful inspection of our proof yields $c_d\ge c_{d-1}^{K}$ for every $d\ge2$ where $K$ is a universal constant; and so one can take $c_d\ge 2^{-2^{O(d)}}$. We omit the details.)
	
	As a result, this paper can be viewed as part of a growing body of 
	results~\cite{MR4622593,MR3943496,janzer2021,MR4357431} that, when VC-dimension is bounded,  completely settle or significantly improve bounds for well-known open problems in extremal combinatorics.
	
	The story behind \cref{thm:vc}, perhaps, began in geometric graph theory with the result of Larman, Matou\v sek, Pach, and T\"or\H ocsik~\cite{MR1272297} that the class of intersection graphs of line segments in general position in the plane has the \erh{} property (in fact, they proved more, for intersection graphs of convex compact sets in the plane).
	Alon, Pach, Pinchasi, Radoi\v ci\'c, and Sharir~\cite{MR2156215} generalized this result to classes of semi-algebraic graphs of bounded description complexity.
	Fox, Pach, and T\'oth~\cite{MR2776643} provided another extension of~\cite{MR1272297} by verifying the \erh{} property of the class of string graphs where every two curves cross a bounded number of times
	(and a recent result by Tomon~\cite{tomon2020} shows that the condition \dd every two curves cross a bounded number of times\ee{} can be dropped).
	In yet another direction, Sudakov and Tomon~\cite{MR4502021} recently proved \cref{conj:eh} for the classes of algebraic graphs of bounded description complexity, which is an analogue of the result of~\cite{MR2156215}.
	All of these hereditary classes (except intersection graphs of convex compact sets, and more generally, string graphs) turn out to have bounded VC-dimension.
	Indeed,
	\begin{itemize}
		\item for the classes 
		of semi-algebraic graphs of bounded description complexity, this is true by the classical Milnor--Thom theorem 
		in real algebraic geometry (see~\cite{MR1899299});
		
		\item 
		for the classes of algebraic graphs of bounded description complexity,
		this is a consequence of a theorem of R\'onyai, Babai, and Ganapathy~\cite{MR1824986} on the number of zero-patterns of polynomials; and
		
		\item for the classes of string graphs where every two curves intersect at a bounded number of points, this follows from a 
		result of Pach and T\'oth~\cite[Lemma 4.2]{MR2279670} together with the standard fact that a hereditary class has bounded VC-dimension 
		if and only if it does not contain a bipartite graph, the complement of a bipartite graph, and a split graph (see \cref{lem:vcind}).
	\end{itemize}
	
	In a model-theoretic setting, \cref{thm:vc} states that every class of graphs definable in NIP (non-independence property) structures has the \erh{} property (see~\cite{MR3560428} for a general reference on NIP theories),
	which was formally stated as a conjecture by Chernikov, Starchenko, and Thomas~\cite{MR4205776}. 
	Two notable special cases of NIP graphs include distal graphs and stable graphs.\footnote{This is 
	different from the notion of stable vertex subsets in graphs;
	in graph-theoretic language, it means graphs not containing a fixed half-graph bipartite pattern. See also~\cite{MR3789671}.}
	Malliaris and Shelah~\cite{MR3145742,MR4386783} implicitly proved \cref{conj:eh} for stable graphs (which contains the result of~\cite{MR4502021}) by developing regularity lemmas for these graphs
	(see~\cite{MR3731711} for a short proof using pseudo-finite model theory).
	In the case of distal graphs, Basu~\cite{MR2595744} proved the \erh{} property for graphs definable by o-minimal structures (which in fact extends~\cite{MR2156215}),
	before Chernikov and Starchenko~\cite{MR3852184} made use of the theory of Keisler measures in NIP to formulate regularity lemmas for distal graphs and settle the general problem in this direction (see also~\cite{MR3503725} for a short and pure model-theoretic proof).
	Recently, Fu~\cite{fu2023} also combined tools from model theory and a result from~\cite{MR4563865} to prove the \erh{} property of the class of graphs of VC-dimension at most two.
	
	In the combinatorial setting, \cref{thm:vc} turns out to be equivalent to the fact that every tournament not containing a fixed two-colourable subtournament has a transitive vertex subset of polynomial size;
	we will discuss this in more detail in \cref{subsec:tour}.
	\cref{thm:vc} also extends to the setting of uniform hypergraphs, allowing us to give tighter asymptotics on Ramsey numbers for $k$-uniform hypergraphs with bounded VC-dimension; we will discuss this in \cref{subsec:hyper}.

	%
	
	As in previous papers of the series, we need to work in a more general context, and will prove a stronger result:
	our main result says that graphs of bounded VC-dimension satisfy the polynomial form of a theorem of R\"odl.
	For a graph $G$, let $\overline G$ denote its complement; and for $\eps\in(0,\frac12)$, we say that $G$ is {\em $\eps$-sparse} if $G$ has maximum degree at most $\eps\abs G$, and {\em $\eps$-restricted} if one of $G,\overline G$ is $\eps$-sparse.  We recall that
	R\"odl's theorem~\cite{MR837962} then states the following:
	\begin{theorem}
		\label{thm:rodl}
		For every proper hereditary class $\mac C$ of graphs and every $\eps\in(0,\frac12)$, there exists $\delta>0$ such that every graph $G\in\mac C$ contains an $\eps$-restricted induced subgraph on at least $\delta\abs G$ vertices.
	\end{theorem}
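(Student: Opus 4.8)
The plan is to deduce \cref{thm:rodl} from Szemer\'edi's regularity lemma together with Ramsey's theorem, in the classical way. Fix a proper hereditary class $\mac C$, pick a graph $H\nin\mac C$, and set $k=\abs{V(H)}$; since $\mac C$ is hereditary, every $G\in\mac C$ is induced-$H$-free. I would first prove the weaker \emph{density} form: for every $\eps'\in(0,\tfrac12)$ there is $\delta'>0$ such that every induced-$H$-free graph $G$ has a vertex set $S$ with $\abs S\ge\delta'\abs G$ for which $G[S]$ has edge density either $<\eps'$ or $>1-\eps'$. The maximum-degree form in \cref{thm:rodl} then follows by a short cleaning step: if $G[S]$ has density at most $\eps^2/8$ then at most $\eps\abs S/4$ vertices of $G[S]$ have degree exceeding $\eps\abs S/2$, so deleting them leaves $S'\subseteq S$ with $\abs{S'}\ge\abs S/2$ and $G[S']$ of maximum degree below $\eps\abs{S'}$; one applies this to $S$ in the low-density case and to $S$ inside $\overline G$ in the high-density case, taking $\eps'=\eps^2/8$.

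For the density form, apply the regularity lemma to $G$ with a regularity parameter $\gamma$ and a lower bound $r_0$ on the number of parts (both fixed at the end), obtaining an equipartition $V_1,\dots,V_M$ into parts of size $m$ (a bounded exceptional set discarded), with at most $\gamma M^2$ pairs not $\gamma$-regular. Colour each $\gamma$-regular pair $\{V_i,V_j\}$ \emph{sparse}, \emph{dense}, or \emph{medium} according to whether its density lies in $[0,\eps'/2)$, in $(1-\eps'/2,1]$, or in $[\eps'/2,1-\eps'/2]$. The non-$\gamma$-regular pairs span a graph on $[M]$ with at most $\gamma M^2$ edges, so Tur\'an's theorem yields a set of $\Omega(1/\gamma)$ indices spanning no such pair; on this set every pair is coloured, and by the three-colour Ramsey theorem, once $\gamma$ is small and $r_0$ large enough that $\Omega(1/\gamma)$ exceeds $R_3(t,t,k)$ with $t:=\ceil{2/\eps'}$, we find either $t$ indices spanning a monochromatic sparse or dense clique, or $k$ indices spanning a monochromatic medium clique.

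The medium alternative cannot occur. If $V_{i_1},\dots,V_{i_k}$ are pairwise $\gamma$-regular with all pairwise densities in $[\eps'/2,1-\eps'/2]$, then taking $\gamma\le(\eps'/4)^k$ the usual induced embedding argument applies: choosing one vertex from each cluster, one at a time, while keeping the candidate sets in the remaining clusters of size at least $(\eps'/4)^km\ge\gamma m$, one realises \emph{any} prescribed adjacency pattern on $k$ vertices, in particular an induced copy of $H$ — a contradiction. So we are left with, say, sparse indices $i_1,\dots,i_t$. Put $S=V_{i_1}\cup\dots\cup V_{i_t}$; then $\abs S=tm$ is a fixed fraction of $\abs G$, and since each of the $\binom t2$ cross-pairs has density $<\eps'/2$ while the pairs lying inside a single cluster form less than a $1/t$-fraction of all pairs inside $S$, the edge density of $G[S]$ is less than $1/t+\eps'/2\le\eps'$. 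The dense case is identical after passing to $\overline G$. This proves the density form with $\delta'$ a function of $t$ and $M$, and hence \cref{thm:rodl}.

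The step I would flag as the real content is the passage from ``density bounded away from $1$'' to ``density below $\eps$'': merely $2$-colouring the regular pairs by whether the density is below or above $\tfrac12$ and extracting a monochromatic clique is not enough, since it controls density only up to a constant. The resolution is the three-way colouring together with the fact that an induced-$H$-free graph cannot contain a \emph{medium} clique of size $k$, which forces a genuinely sparse (or genuinely dense) clique, plus the observation that this clique must be taken large — $t\gtrsim1/\eps'$ clusters — so that the uncontrolled within-cluster edges contribute only a negligible $1/t$ to the density of $G[S]$. The remainder is a routine descending choice of constants $\eps\rightsquigarrow\eps'\rightsquigarrow t\rightsquigarrow R_3(t,t,k)\rightsquigarrow\gamma,r_0$, with $\delta$ the surviving (tower-type, but positive) fraction.
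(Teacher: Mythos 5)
The paper does not prove \cref{thm:rodl}; it is stated as a known result of R\"odl, citing~\cite{MR837962}, with the remark that R\"odl's original proof used Szemer\'edi's regularity lemma and gave tower-type bounds. Your proposal is a correct reconstruction of essentially that classical argument: reduce to a density statement and clean up, apply the regularity lemma, three-colour the regular pairs, use three-colour Ramsey to extract a monochromatic clique of clusters, rule out the medium colour by the induced embedding lemma since $G$ is induced-$H$-free, and observe that taking $t\gtrsim 1/\eps'$ sparse (or, in $\overline G$, dense) clusters makes the uncontrolled within-cluster contribution negligible. Your self-identified concern --- that a two-way colouring by density relative to $\tfrac12$ would not control density tightly enough, and that the three-way colouring plus the forced large size $t$ of the extracted clique is the real content --- is exactly the right thing to flag. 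Only minor constant bookkeeping is left implicit: you need $r_0 \gtrsim 1/\gamma$ so that the Tur\'an step yields $\Omega(1/\gamma)$ indices rather than merely $\Omega(M)$; and $\gamma \le (\eps'/4)^k$ is a touch too generous for the embedding step once you also union-bound over the $k-1$ candidate sets being simultaneously restricted (an extra factor of $k$ or one more power of $\eps'/4$ suffices). Neither affects the validity of the argument, and you explicitly defer these choices.
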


The dependence of $\delta$ on $\eps$ here is an important question.  R\"odl's proof used Szemer\'edi's regularity lemma, and gave tower-type bounds.  A better dependence was obtained by
Fox and Sudakov~\cite{MR2455625}, who gave a proof with $\delta =2^{-c\log^2(1/\eps)}$;
this was improved to $\delta =2^{-c\log^2(1/\eps)/\log\log(1/\eps)}$ in \cite{density1}.
Fox and Sudakov~\cite{MR2455625} conjectured that in fact the dependence can be taken to be polynomial, which would imply the Erd\H os-Hajnal conjecture.

	Let us say a hereditary class $\mac C$ has the {\em polynomial R\"odl property} if there exists $C>0$ such that for 
	every $\eps\in(0,\frac12)$, every graph $G\in\mac C$ contains an $\eps$-restricted induced subgraph on at least $\eps^C\abs G$ vertices.
	The Fox--Sudakov conjecture is then the following strengthening of \cref{conj:eh}:
	\begin{conjecture}
		\label{conj:fs}
		Every proper hereditary class of graphs has the polynomial R\"odl property. 
	\end{conjecture}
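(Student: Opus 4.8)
The plan is to attack \cref{conj:fs} by first reducing it to the case of a single excluded induced subgraph, then running the iterative sparsification machinery of the series, with the Lov\'asz--Szegedy ultra-strong regularity lemma (available only under bounded VC-dimension) replaced by a polynomial-strength regularity partition valid for an arbitrary proper hereditary class --- this last ingredient being, in effect, the heart of the difficulty.

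\textbf{Step 1: reduction to excluded subgraphs.} Every proper hereditary class $\mac C$ is contained in $\mathrm{Forb}(H)$ for some fixed $H$ (take any $H\nin\mac C$), and the polynomial R\"odl property is monotone under this containment, since an $\eps^C\abs G$-sized $\eps$-restricted induced subgraph of $G\in\mac C\subset\mathrm{Forb}(H)$ is still one in $G$. So it suffices to show $\mathrm{Forb}(H)$ has the property for every $H$. Moreover this property is preserved under vertex-substitution of the excluded graph: if $H$ is obtained from $H_1$ by substituting $H_2$ for a vertex, and $\mathrm{Forb}(H_1),\mathrm{Forb}(H_2)$ both have the property, then so does $\mathrm{Forb}(H)$ --- this is the R\"odl-property analogue of the Alon--Pach--Solymosi argument, and goes through with polynomial bounds because a composition of two polynomials in $\eps^{-1}$ is again polynomial in $\eps^{-1}$. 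Hence we may assume $H$ is prime with respect to substitution, and induct on $\abs{V(H)}$; the base cases $\abs{V(H)}\le 4$ are trivial or are handled by \cref{thm:vc}, since the only prime $H$ on at most four vertices is $P_4$, and $\mathrm{Forb}(P_4)$ excludes the bipartite graph, the co-bipartite graph, and the split graph $P_4$ and so has bounded VC-dimension by \cref{lem:vcind}.

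\textbf{Step 2: iterative sparsification.} Fix $\eps$ and a large $G\in\mathrm{Forb}(H)$. Following the earlier papers of the series, maintain a large vertex subset that is ``pure'' --- of controlled edge density --- and repeatedly either find inside it a large $\eps$-restricted induced subgraph (and stop), or pass to a significantly sparser but still large subset, in which the density has dropped by a constant factor (or a polynomial factor) while the size has shrunk only by a polynomial factor of $\eps$. After $O(\log(\eps^{-1}))$ rounds the density is below $\eps$ and the cumulative loss is a bounded power of $\eps$. The engine of each round is a regularity-type statement: inside a sufficiently dense $H$-free graph one finds a large vertex set partitioned into few parts, with the number of parts polynomial in $\eps^{-1}$ rather than tower-type, such that almost every pair of parts is homogeneous-like (very sparse or very dense between them); given such a partition, primality of $H$ and the induction hypothesis are used to upgrade ``most pairs homogeneous'' into the sparsification step, exactly as in the bounded-VC-dimension proof.

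\textbf{Step 3: the polynomial regularity input (the main obstacle).} For bounded VC-dimension this partition is supplied by Lov\'asz--Szegedy with polynomially many parts; for a general proper hereditary class no such lemma is known --- Szemer\'edi regularity yields only tower bounds, and the best general bounds (Fox--Sudakov, and \cite{density1}) are only quasi-polynomial. I would try to substitute a softer, $H$-tailored structure for the global regularity partition: rather than an approximate blow-up structure on all of $G$, extract, by a VC-type dichotomy applied to the trace of the neighbourhood system on a random sample, either a large shattered configuration --- which a Ramsey argument on the sample should convert into an induced copy of $H$ once $\abs{V(H)}$ is fixed, contradicting $H$-freeness (this is where one must genuinely use the single exclusion of $H$, which carries no a priori dimension bound, rather than bounded VC-dimension) --- or a bounded-complexity cylinder intersection structure that drives the iteration. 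I expect this to be the crux, and I do not currently see a route that avoids either (a) proving a polynomial-bound ultra-strong regularity lemma for arbitrary hereditary classes, itself a major advance, or (b) an $H$-specific sparsification that bypasses global regularity, perhaps peeling off one vertex of $H$ at a time via the blockade lemmas of the series. Without such an ingredient the programme above reproves only the bounded-VC-dimension case, which is \cref{thm:vc}; closing the gap to the full statement of \cref{conj:fs} is precisely what remains open.
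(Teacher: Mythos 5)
The statement you were asked to prove is \cref{conj:fs}, which the paper presents as an \emph{open conjecture} (due to Fox and Sudakov) and does not prove: since the polynomial R\"odl property implies the \erh{} property, \cref{conj:fs} strengthens the full Erd\H{o}s--Hajnal conjecture (\cref{conj:eh}), and the paper establishes only the special case of bounded VC-dimension (\cref{thm:vcpolyrodl}, equivalently \cref{thm:main}). Your proposal is honest on exactly this point: in Step 3 you state that the decisive ingredient --- a regularity or blockade statement with polynomial bounds valid for an arbitrary proper hereditary class, replacing the Lov\'asz--Szegedy ultra-strong regularity lemma that exists only under bounded VC-dimension --- is missing, and that without it your programme reproves only \cref{thm:vc}. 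That admission is accurate, and it means the proposal is not a proof. Steps 1 and 2 assemble a reduction and an iteration whose engine does not exist; this is a genuine gap and not a repairable oversight, since filling it would settle a major open problem.

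Two further cautions on Step 1, though they are moot given the main gap. First, your claim that the polynomial R\"odl property of $\mathrm{Forb}(H)$ is preserved under substitution of the excluded graph is asserted, not proved: the Alon--Pach--Solymosi substitution argument is formulated for the \erh{} property (finding one large clique or stable set), and adapting it to produce, for \emph{every} $\eps$, an $\eps$-restricted induced subgraph of size $\eps^{C}\abs{G}$ requires tracking how the restricted subgraph found inside a polynomially smaller $H_2$-free piece compares against $\eps^{C}\abs G$; ``a composition of polynomials is a polynomial'' does not by itself discharge this. Second, even granting substitution-closure, the induction on $\abs{V(H)}$ over prime graphs has no inductive step: the step is precisely the content of Steps 2--3, where the argument stops. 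So the correct assessment of your submission is that it is a reasonable research programme whose crux you have identified correctly, but the statement remains a conjecture both in the paper and after your attempt.
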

	Our main result says that every hereditary class of graphs of bounded VC-dimension does indeed satisfy \cref{conj:fs}.
	\begin{theorem}
		\label{thm:vcpolyrodl}
		For every $d\ge1$, the class of graphs of VC-dimension at most $d$ has the polynomial R\"odl property.
	\end{theorem}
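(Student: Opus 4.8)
The plan is to deduce \cref{thm:vcpolyrodl} from two inputs: the Lov\'asz--Szegedy ultra-strong regularity lemma for graphs of bounded VC-dimension, which provides, for each $\eta>0$, a near-equitable vertex partition into only $k\le\eta^{-O_d(1)}$ parts with all but an $\eta$-fraction of the pairs being $\eta$-homogeneous; and the iterative sparsification framework from the earlier paper in the series, which reduces the polynomial R\"odl property to repeatedly finding, in a non-$\eps$-restricted member of the class, a large induced subgraph in which a suitable density potential has dropped. Throughout we may assume $\eps$ is small, and (replacing $G$ by $\overline G$ if necessary) it suffices to produce an $\eps$-sparse induced subgraph on at least $\eps^C|G|$ vertices.

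First I would carry out the structural step. Fix $\eta$ polynomial in $\eps$, apply the ultra-strong regularity lemma, and clean up: discard the parts incident to too many bad pairs, and delete from each surviving part the few vertices atypical for some relevant pair. This leaves $\ge k/2$ parts, each of linear size, all pairs among them $\eta$-homogeneous, every surviving vertex typical --- so that $G$ is a genuine near-blow-up of its reduced graph $R$ on the surviving parts (an edge of $R$ marking a dense-homogeneous pair). Two features of $R$ drive the argument: $R$ has only $\eta^{-O_d(1)}=\eps^{-O_d(1)}$ vertices, and $R$ again has bounded VC-dimension in terms of $d$ (a shattered set in $R$ pulls back, through typical representatives of the parts, to a shattered set in $G$ of essentially the same size, via a union bound that is feasible because $\eta$ is tiny).

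Next comes the iterative sparsification. The naive recursion --- take a clique or stable set in $R$, recurse inside each chosen part, and glue --- fails on two counts: gluing a stable set across an anticomplete-\emph{homogeneous} (not genuinely anticomplete) pattern leaves $\eta$-many cross-edges per vertex, and, worse, applying the inductive bound both to $R$ (on $\eps^{-O_d(1)}$ vertices) and to a part (on linearly many vertices) multiplies the two polynomial losses and blows up the exponent. Instead I would maintain an induced subgraph $H$ of $G$ with its regularity partition and reduced graph, and iterate: if $H$ is already $\eps$-restricted, stop; otherwise use the near-blow-up structure --- in particular a sparse, or co-sparse, induced subgraph of the \emph{small} reduced graph $R$, supplied by the inductive hypothesis applied to $R$ alone --- to pass to an induced subgraph $H'$ in which the edge density (or a finer potential controlling both cross-densities and within-part densities) has dropped by a definite amount while $|H'|\ge(1-\mathrm{poly}(\eps))|H|$. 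The potential is bounded and drops geometrically, so only $O(\log(1/\eps))$ steps (or $O(\log\log(1/\eps))$, with the sharper potential) are needed, keeping the total loss at $\ge(1-\mathrm{poly}(\eps))^{O(\log(1/\eps))}\ge\eps^{O(1)}$; deleting at the end the at most $\eps$-fraction of vertices of degree exceeding $\eps|H|$ converts the final low-density graph into an $\eps$-sparse one.

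The main obstacle is precisely this amortization: arranging that each sparsification step costs only a $\mathrm{poly}(\eps)$ \emph{fraction} of the current vertex set, not a $\mathrm{poly}(\eps)$ \emph{factor}, so that the losses over all steps multiply to something still polynomial in $\eps$. This calls for a per-vertex strengthening of the ultra-strong regularity lemma --- pairwise homogeneity controls edge counts, but ``$\eps$-restricted'' is about individual degrees --- and for a choice of potential and of step that prevents the accumulated near-homogeneity errors, across parts and across iterations, from ever forcing a multiplicative size loss; one must also balance $\eta$, the granularity of the potential, and the iteration count against $\eps$ so that the final exponent $C$ depends only on $d$. The polynomial bound on the number of regularity parts --- the distinctive feature of bounded VC-dimension --- is exactly what keeps the cross-part error terms, and the cost of recursing on the reduced graph, polynomially small.
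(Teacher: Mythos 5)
The proposal correctly identifies the two high-level ingredients --- the Lov\'asz--Szegedy ultra-strong regularity lemma and the iterative-sparsification framework --- and even names the central difficulty (making the per-step vertex loss cheap enough that the iteration's accumulated loss stays polynomial). But it stops there: the amortization scheme you sketch is not the one the paper uses, and as written it does not work. You propose to maintain an edge-density potential, drop it by a ``definite amount'' per step while losing only a $(1-\operatorname{poly}(\eps))$-\emph{fraction} of vertices, and take $O(\log(1/\eps))$ steps. But the regularity lemma only controls \emph{cross-pair} edges; the natural move (pass to one part, or to a union of parts indexed by a clique/stable set of the reduced graph $R$) already costs a $\operatorname{poly}(\eps)$-\emph{factor}, not a $(1-\operatorname{poly}(\eps))$-fraction, precisely because $R$ has $\operatorname{poly}(1/\eps)$ vertices and a polynomially large homogeneous set in $R$ is still only a $\operatorname{poly}(\eps)$-fraction of $R$. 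You acknowledge this as ``the main obstacle'' and then propose to ``balance $\eta$, the granularity of the potential, and the iteration count,'' which is not a proof. There is also a circularity worry in ``the inductive hypothesis applied to $R$ alone'': $R$ is again a bounded-VC graph on $\operatorname{poly}(1/\eps)$ vertices, so appealing to the very theorem you are proving on $R$ needs justification.

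The paper's scheme is genuinely different and sidesteps these issues. Rather than controlling a density potential with sub-constant loss per step, \cref{lem:sparsify} exponentiates the restriction parameter ($y\mapsto y^a$) and tolerates a $\operatorname{poly}(y)$ \emph{factor} loss at each step; because $y$ decays doubly exponentially, the product of the losses telescopes to $\eps^{O(1)}$. The step itself (\cref{lem:vc1,lem:vc2}) does not aim to drop a density; instead, given a $y$-sparse graph with no bi-induced $H$, it produces either a much more restricted subgraph or a complete/anticomplete blockade. The key enabling idea --- entirely missing from your proposal --- is an induction on $|H|$: one applies the Erd\H{o}s--Hajnal property for $H\setminus v$ to an auxiliary graph on the regularity blocks (the ``which pairs are genuinely sparse'' pattern), and the mixed-vertex argument in the proof of the claim inside \cref{lem:vc1} is what forces that auxiliary graph to be $(H\setminus v)$-free. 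Without some replacement for that induction and for the ``restricted subgraph or anticomplete pair'' dichotomy, your iteration has no mechanism to actually make progress, so the proposal as it stands has a genuine gap at the crux of the argument.
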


Our proof of \cref{thm:vcpolyrodl} uses the ultra-strong regularity lemma for graphs of bounded VC-dimension proved
by Lov\'asz and Szegedy, and builds on the method of iterative sparsification introduced in earlier papers of the series \cite{density3,density4}.  The method involves passing through a sequence of induced subgraphs that are successively more restricted (see \cref{subsec:sparsify}), and naturally leads to the extra generality of \cref{thm:vcpolyrodl}.

We would like to remark that Buci\'c, Fox, and Pham~\cite{bfp} recently proved that the \erh{} property is in fact equivalent to the polynomial R\"odl property for any hereditary class of graphs, which implies that \cref{thm:vc,thm:vcpolyrodl} are equivalent.
Nevertheless, for our proof method means that it is more convenient to prove the result in the stronger polynomial R\"odl form; and in the case of graphs of bounded VC-dimension, such an equivalence can also be obtained via an application of the ultra-strong regularity lemma (see \cref{thm:cond} and the argument at the end of \cref{subsec:sparsify}).
	\section{Ultra-strong regularity, bigraphs and nearly pure blockades}\label{sec:reg}

A natural approach to proving Erd\H os-Hajnal-type results is to attempt to find induced subgraphs that are highly structured; the challenge is to control the size and structure of these subgraphs.  In this section, we work with large subgraphs that are `approximately' nicely structured: up to some noise, they look like a blowup of some much smaller graph.  These will be helpful in the next section, where we look for subgraphs where the structure is more tightly controlled.

A {\em blockade} in a graph $G$ is a sequence $\mac B=(B_1,\ldots,B_k)$ of pairwise disjoint (and possibly empty) subsets of $V(G)$, called {\em blocks}; the {\em length} of $\mac B$ is $k$ and the {\em width} is $\min_{i\in[k]}|B_i|$. For $\ell,w\ge0$, we say that $\mac B$ is an {\em $(\ell,w)$-blockade} if it has length at least $\ell$ and width at least $w$.

 We will be interested in blockades where the pairs $(B_i,B_j)$ satisfy certain density conditions.  Two disjoint subsets $A,B$ of vertices are {\em anticomplete} in $G$ if $G$ has no edge between $A,B$;
	{\em complete} in $G$ if $A,B$ are anticomplete in $\overline G$;
	and {\em pure} if $A,B$ are either complete or anticomplete in $G$.
A blockade $\mac B=(B_1,\ldots,B_k)$ is {\em anticomplete} in $G$ if $B_i,B_j$ are anticomplete in $G$ for all distinct $i,j\in[k]$,
	{\em complete} in $G$ if $\mac B$ is anticomplete in $\overline G$
 and {\em pure} if, for all distinct $i,j\in[k]$, $B_i,B_j$ are either complete or anticomplete.  Note that for a blockade, being pure is a far weaker condition than being a complete or anticomplete blockade.

 In general, complete or anticomplete blockades will be very desirable; but we will first need to work with blockades that are only pure, and that only satisfy approximate versions of the conditions. 
 For $\eps>0$ and a graph $G$, two disjoint subsets $A,B$ are {\em weakly $\eps$-sparse} in $G$
	if $G$ has at most $\eps\abs A\abs B$ edges between $A,B$,
	and {\em weakly $\eps$-pure} if they are weakly $\eps$-sparse in $G$ or $\overline G$.
	We say that $B$ is {\em $\eps$-sparse} to $A$ in $G$ if every vertex of $B$ is adjacent in $G$ to at most $\eps\abs A$ vertices in $A$; the pair $(A,B)$ is {\em $\eps$-pure} if each is $\eps$-sparse to the other in $G$ or $\overline G$. 
 A blockade
 $\mac B$ is   
{\em $\eps$-pure} 
 or {\em weakly $\eps$-pure} if every pair $B_i,B_j$ satisfies the corresponding condition.

The first step in the proof of \cref{thm:main} is to find a large $\eps$-pure blockade.
For this we use the ultra-strong regularity lemma of Lov\'asz and Szegedy~\cite[Section 4]{MR2815610} 
	which was reproved by Fox, 
	Pach, and Suk~\cite[Theorem 1.3]{MR3943496} via a short proof that gives better bounds 
	(a weaker version for bipartite graphs was also developed by Alon, Fisher, and Newman~\cite{MR2341924}). The ultra-strong regularity lemma says:
	\begin{theorem}
		\label{thm:partition}
		For every $d\ge1$, there exists $K\ge2$ such that the following holds.
		For every $\eps\in(0,\frac12)$ and every graph $G$ with VC-dimension at most $d$,
		there exist $L\in[\eps^{-1},\eps^{-K}]$ and an equipartition $V(G)=V_1\cup\cdots\cup V_L$ such that all but at most an $\eps$ fraction of the pairs $(V_i,V_j)$ are weakly $\eps$-pure in $G$.
	\end{theorem}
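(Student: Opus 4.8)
The statement concerns the set system $\mac N=\{N_G(v):v\in V(G)\}$ on $V(G)$, which has VC-dimension at most $d$, and the plan is to build the partition directly from the structure of $\mac N$ using three classical consequences of bounded VC-dimension: the Sauer--Shelah lemma (the number of distinct sets in the restriction of $\mac N$ to any set $U$ is $O_d(|U|^{d})$); the $\eps$-net/$\eps$-approximation theorem (a uniformly random subset of $V(G)$ of size $O_d(\eps^{-1}\log\eps^{-1})$ is, with high probability, an $\eps$-net for any set system of VC-dimension $O(d)$); and, if a cleaner construction is preferred, Haussler's packing lemma (a pairwise $\eps$-separated family of subsets of $V(G)$, meaning all symmetric differences have size at least $\eps|V(G)|$, has size $O_d(\eps^{-d})$). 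All of these are used only through $\mac N$ and its ``symmetric-difference'' extension $\{N_G(u)\triangle N_G(v):u,v\in V(G)\}$, whose VC-dimension is $O(d)$.

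First I would fix a sufficiently small $\delta=\delta(\eps,d)$, still polynomial in $\eps$, choose a random sample $Z\subset V(G)$ of size $O_d(\delta^{-1}\log\delta^{-1})$, and, discarding a low-probability event, assume that $Z$ is a $\delta$-net for $\{N_G(u)\triangle N_G(v):u,v\in V(G)\}$. Then any two vertices $u,v$ with $N_G(u)\cap Z=N_G(v)\cap Z$ satisfy $|N_G(u)\triangle N_G(v)|<\delta|V(G)|$. Grouping $V(G)\setminus Z$ according to the trace $N_G(v)\cap Z$ produces a partition into $L_0=O_d(|Z|^{d})=\eps^{-O_d(1)}$ classes, in each of which any two vertices have almost-equal neighbourhoods; absorbing the $o(|V(G)|)$ vertices of $Z$ into these classes and then splitting the classes into pieces of a common (small) size changes edge densities negligibly and yields an equipartition $V(G)=V_1\cup\cdots\cup V_L$ with $L\in[\eps^{-1},\eps^{-K}]$ for a suitable $K=K(d)$. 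The packing-lemma route gives the same object: take a maximal pairwise $\delta$-separated subfamily of $\mac N$, assign each vertex to a closest representative, and equipartition the resulting clusters.

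It remains to show that all but an $\eps$ fraction of the pairs $(V_i,V_j)$ are weakly $\eps$-pure, and this is the crux. Writing $d_{ij}$ for the edge density between $V_i$ and $V_j$, it suffices to prove $\sum_{i,j}|V_i|\,|V_j|\,d_{ij}(1-d_{ij})\le\eps^{2}|V(G)|^{2}$, since every pair that fails to be weakly $\eps$-pure contributes at least $\tfrac{\eps}{2}|V_i|\,|V_j|$ to the left side. I would bound this sum globally, not pair by pair: expand $d_{ij}(1-d_{ij})$ in terms of the degrees $|N_G(v)\cap V_j|$ for $v\in V_i$, and exploit that within each $V_i$ the degree vectors $(|N_G(v)\cap V_j|)_j$ have total $\ell_1$-variation at most $\delta|V(G)|$ (this is exactly the near-equality of neighbourhoods inside a class), together with the Sauer--Shelah bound on the number of distinct traces $N_G(v)\cap V_j$, which limits how many pairs can be genuinely mixed. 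Following Fox, Pach and Suk, a careful accounting of these two inputs, with $\delta$ chosen small enough in terms of $\eps$ and $L$ but still a fixed power of $\eps$, yields the required inequality; a short iteration of the sparsification of the partition may be needed to close the bookkeeping.

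The main obstacle is precisely this last step, and specifically the tension in the choice of $\delta$: forcing the within-class neighbourhoods closer drives $\delta$ down, which drives $|Z|$ and hence $L$ up, and the count of non-pure pairs depends on both; moreover an individual class $V_j$ may be far smaller than the resolution scale $\delta|V(G)|$, so one cannot approximate $d_{ij}$ by the density from a single representative vertex, and the error terms have to cancel across all pairs simultaneously rather than being controlled one pair at a time. The remaining points -- that $Z$ is an $\eps$-net with high probability, that the trace partition has $\eps^{-O_d(1)}$ parts, and that an unbalanced partition can be refined to an equipartition of controlled length while only changing $\eps$ by a constant factor -- are routine.
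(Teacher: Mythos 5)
The paper does not prove \cref{thm:partition}: it is quoted from Lov\'asz--Szegedy~\cite{MR2815610} and Fox--Pach--Suk~\cite{MR3943496}, so the comparison has to be against the latter proof. Your outline reproduces its skeleton correctly --- use Haussler's packing lemma (or a random $\delta$-net together with Sauer--Shelah) to extract $O_d(\delta^{-O_d(1)})$ ``representative'' neighbourhoods, cluster vertices by nearest representative so that any two $u,v$ in the same cluster have $|N_G(u)\triangle N_G(v)|\le\delta|V(G)|$, refine to an equipartition, and then show most pairs are weakly $\eps$-pure. You also correctly identify that the last step is the crux and that the errors must be accounted for globally rather than pair-by-pair, because a single class can be much smaller than the resolution scale $\delta|V(G)|$.

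The gap is that you never actually carry out that global accounting; you assert that ``a careful accounting \ldots{} yields the required inequality'' and defer to Fox--Pach--Suk. This is precisely the step that makes the theorem true, so as written the proposal is an outline with the key estimate missing. Concretely: the needed inequality is that the within-class closeness of neighbourhoods forces
\[
\sum_{i,j}|V_i|\,|V_j|\,d_{ij}(1-d_{ij})\ \le\ \delta\,|V(G)|^2,
\]
and one way to obtain it is the law of total variance. Writing $d_w=|N_G(w)\cap V_i|$ for $w\in V_j$, one has
\[
d_{ij}(1-d_{ij})
=\mathbb E_{w\in V_j}\Bigl[\tfrac{d_w}{|V_i|}\bigl(1-\tfrac{d_w}{|V_i|}\bigr)\Bigr]
+\operatorname{Var}_{w\in V_j}\Bigl(\tfrac{d_w}{|V_i|}\Bigr),
\]
and after multiplying by $|V_i||V_j|$ and summing over $j$ (equivalently over all $w$) the first term collapses to $\tfrac{1}{2|V_i|}\sum_{u,u'\in V_i}|N_G(u)\triangle N_G(u')|\le\tfrac12|V_i|\delta|V(G)|$; the second term is bounded by
$\tfrac{1}{2|V_j|}\sum_{w,w'\in V_j}|(N_G(w)\triangle N_G(w'))\cap V_i|$, and summing over $i$ turns the restricted symmetric differences into full ones, giving the same bound with the roles of $i$ and $j$ swapped. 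Summing over the remaining index yields $\le\delta|V(G)|^2$ in total, which with $\delta$ a suitable power of $\eps$ gives the $\eps$-fraction bound. Note that this uses only the pairwise closeness of neighbourhoods within a class; the second application of Sauer--Shelah you propose (to the traces $N_G(v)\cap V_j$) and the ``short iteration of sparsification'' are both unnecessary. So: right ingredients, right structure, right diagnosis of where the difficulty lies, but the pivotal inequality is left unproven.
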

	Here, an {\em equipartition} of a set $S$ is a partition $S_1\cup\cdots\cup S_k$ of $S$ such that $\abs{S_i},\abs{S_j}$ differ by at most one for all $i,j\in[k]=:\{1,\ldots,k\}$.

It will be helpful to work with bipartite patterns in $G$.  Let us make this more formal. A {\em bigraph} is a graph $H$
	together with a bipartition $(V_1(H), V_2(H))$ of $V(H)$.
If $H$ is a bigraph and $v\in V(H)$, we denote by $H\setminus v$ the bigraph $H'$ obtained by deleting $v$, where for $i = 1,2$,
	$V_i(H')=V_i(H)\setminus \{v\}$ if $v\in V_i(H)$, and $V_i(H')=V_i(H)$ otherwise. 
	For a bigraph $H$, a {\em bi-induced} copy of $H$ in a graph $G$ is an injective map 
	$\varphi\colon V(H)\to V(G)$ such that, for all $u\in V_1(H)$ and $v\in V_2(H)$, $uv\in E(H)$ if and only if $\varphi (u)\varphi(v)\in E(G)$.
	We recall the following standard fact (see~\cite[Lemma 3.3]{MR3416129}).
	\begin{lemma}
		\label{lem:vcbip}
		For every bigraph $H$, there exists $d\ge1$ such that every graph of VC-dimension at least $d$ contains a bi-induced copy of $H$.
		Conversely, for every $d\ge1$, there is a bigraph $H$ such that every graph containing a bi-induced copy of $H$ has VC-dimension at least $d$.
	\end{lemma}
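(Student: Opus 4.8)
I would handle the two implications separately, in each case working directly from the definition of shattering; the observation that makes everything clean is that a bi-induced copy of a bigraph $H$ pins down only the edges of $G$ between the images of $V_1(H)$ and $V_2(H)$ and says nothing about edges within either image, so no Ramsey-type input is needed.

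\emph{The converse.} The plan is to take $H$ to be the ``universal'' bigraph $H_d$ with $V_1(H_d)=\{a_1,\dots,a_d\}$, with $V_2(H_d)=\{b_A:A\subseteq[d]\}$, and with $a_ib_A\in E(H_d)$ exactly when $i\in A$. Suppose $\varphi$ is a bi-induced copy of $H_d$ in a graph $G$. Then $S:=\{\varphi(a_1),\dots,\varphi(a_d)\}$ has size $d$ since $\varphi$ is injective, and I would observe that for every $A\subseteq[d]$ the definition of a bi-induced copy gives $N_G(\varphi(b_A))\cap S=\{\varphi(a_i):i\in A\}$; as $A$ ranges over all subsets of $[d]$ these sets range over all subsets of $S$, so $S$ is shattered by $\{N_G(v):v\in V(G)\}$ and hence $G$ has VC-dimension at least $d$.

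\emph{The forward direction.} Fix a bigraph $H$; we may assume $s:=|V_1(H)|\ge1$ and $t:=|V_2(H)|\ge1$, the cases of an empty part being immediate. Put $d:=s+\ceil{\log(s+t)}$, and let $G$ have VC-dimension at least $d$. Then $G$ has a shattered set of size exactly $d$ (a subset of a shattered set is shattered), which I would write as $S=P\cup R$ with $P=\{p_1,\dots,p_s\}$ and $|R|=\ceil{\log(s+t)}$, and I would set $\varphi(a_i)=p_i$. For each $b_j\in V_2(H)$ let $A_j=\{p_i:a_ib_j\in E(H)\}\subseteq P$ be the neighbourhood that $\varphi(b_j)$ must have on $P$. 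Since $2^{|R|}\ge s+t$, I can choose subsets $R_1,\dots,R_t\subseteq R$ that are pairwise distinct and each distinct from all of $N_G(p_1)\cap R,\dots,N_G(p_s)\cap R$; then, using that $S$ is shattered, I can choose $q_j\in V(G)$ with $N_G(q_j)\cap S=A_j\cup R_j$, and set $\varphi(b_j)=q_j$. Looking at traces on $R$ shows the $q_j$ are distinct from one another and from every $p_i$, so $\varphi$ is injective; and for all $i,j$ we get $\varphi(a_i)\varphi(b_j)\in E(G)\iff p_i\in A_j\iff a_ib_j\in E(H)$, so $\varphi$ is a bi-induced copy of $H$.

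The step I expect to need the most care is the last one: if two vertices of $V_2(H)$ have the same neighbourhood in $V_1(H)$, the shattering property by itself supplies only a single witness, and witnesses might also coincide with vertices of the copy of $V_1(H)$. The auxiliary ``tag'' block $R$, with its pairwise-distinct traces, is exactly what forces all of the chosen vertices to be distinct, at the cost of only an additive $\ceil{\log(s+t)}$ in the required VC-dimension. Apart from this bookkeeping the argument is a direct unwinding of the definitions.
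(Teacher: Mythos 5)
Your proof is correct. The paper does not give its own argument for this lemma; it cites it as a standard fact from Bousquet, Lagoutte, Li, Parreau, and Thomass\'e, so there is no internal proof to compare against. Your write-up is a clean self-contained derivation in the expected spirit. The one genuine pitfall in the forward direction is exactly the injectivity issue you flag: for a fixed neighbourhood-on-$P$ the shattering hypothesis only hands you one witness, and that witness could land on some $p_i$. Splitting the shattered set as $S=P\cup R$ and forcing the traces $R_1,\dots,R_t$ on the tag block $R$ to be pairwise distinct and to avoid every $N_G(p_i)\cap R$ (possible since $2^{|R|}\ge s+t$) resolves this, giving $d=s+\lceil\log(s+t)\rceil$, which is essentially best possible. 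The converse via the universal bigraph $H_d$ with $V_2(H_d)=\{b_A:A\subseteq[d]\}$ is immediate once one notes that injectivity of $\varphi$ keeps $\varphi(b_A)$ off the shattered set $S=\{\varphi(a_1),\dots,\varphi(a_d)\}$, so $N_G(\varphi(b_A))\cap S$ realizes exactly the subset indexed by $A$.
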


	In view of this, \cref{thm:partition} can be restated as follows.
	\begin{theorem}
		\label{thm:ultra}
		For every bigraph $H$, there exists $K\ge2$ such that for every $\eps\in(0,\frac12)$ and every graph $G$ with no 
		bi-induced copy of $H$, there exists $L\in[\eps^{-1},\eps^{-K}]$ for which there is an equipartition $V(G)=V_1\cup\cdots\cup V_L$ 
		such that all but at most an $\eps$ fraction of the pairs $(V_i,V_j)$ are weakly $\eps$-pure.
	\end{theorem}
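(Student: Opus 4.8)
The plan is to read off \cref{thm:ultra} from \cref{thm:partition}, using \cref{lem:vcbip} to translate the hypothesis ``$G$ contains no bi-induced copy of $H$'' into ``$G$ has bounded VC-dimension''. Given a bigraph $H$, I would apply the first assertion of \cref{lem:vcbip} to obtain an integer $d\ge1$ such that every graph of VC-dimension at least $d$ contains a bi-induced copy of $H$. Taking contrapositives, every graph $G$ with no bi-induced copy of $H$ has VC-dimension at most $d-1$, hence at most $d$. I would then let $K\ge2$ be exactly the constant supplied by \cref{thm:partition} for this value of $d$.

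With these choices fixed, the argument is immediate: for any $\eps\in(0,\tfrac12)$ and any graph $G$ with no bi-induced copy of $H$, the graph $G$ has VC-dimension at most $d$, so \cref{thm:partition} directly produces an integer $L\in[\eps^{-1},\eps^{-K}]$ and an equipartition $V(G)=V_1\cup\cdots\cup V_L$ in which all but at most an $\eps$ fraction of the pairs $(V_i,V_j)$ are weakly $\eps$-pure. This is precisely the conclusion of \cref{thm:ultra}, so nothing further is needed.

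Accordingly, there is no real obstacle to overcome here: the whole statement is carried by the ultra-strong regularity lemma (\cref{thm:partition}, in the Fox--Pach--Suk form) together with the elementary equivalence recorded in \cref{lem:vcbip}. The only point to keep straight is the quantifier order, namely ``$K$ first, then $\eps$ and $G$'', so that the exponent in the bound $L\le\eps^{-K}$ depends only on $H$; but this is inherited verbatim from \cref{thm:partition}, where $K$ likewise depends only on the VC-dimension bound $d$, which in turn depends only on $H$ via \cref{lem:vcbip}.
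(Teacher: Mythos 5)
Your proof is correct and matches the paper's intent exactly: the paper simply asserts that \cref{thm:ultra} is a restatement of \cref{thm:partition} in view of \cref{lem:vcbip}, and your argument supplies precisely the routine translation (using the first part of \cref{lem:vcbip} to bound VC-dimension, then invoking \cref{thm:partition}), with correct attention to the order of quantifiers.
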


We use \cref{thm:partition} to obtain a blockade that is $\eps$-pure and large (more specifically, we want the length and width to have polynomial dependence on $\eps$).
 
	\begin{theorem}
		\label{thm:cond}
		For every bigraph $H$, there exists $b\ge2$ such that for every $\eps\in(0,\frac12)$ and every graph $G$ with $\abs G\ge \eps^{-b}$ and no bi-induced copy of $H$, there is an $(\eps^{-1},\eps^b\abs G)$-blockade $(B_1,\ldots,B_{\ell})$ in $G$ such that
		\begin{itemize}
			\item $\abs{B_1}=\cdots=\abs{B_{\ell}}\le \eps^2\abs G$; and
			
			\item for all distinct $i,j\in[\ell]$, $B_i,B_j$ are $\eps$-sparse to each other in $G$ or $\overline G$.
		\end{itemize}
	\end{theorem}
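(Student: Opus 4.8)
The plan is to apply the ultra-strong regularity lemma, in the form of \cref{thm:ultra}, with a parameter $\eta$ that is a small power of $\eps$; then to prune the resulting equipartition to a long sub-blockade all of whose pairs are weakly pure; then to clean each block down to a subset on which every per-vertex degree is small; and finally to truncate all blocks to a common size. Throughout, the hypothesis $\abs G\ge\eps^{-b}$ is used only to absorb rounding errors.

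Here are the steps. Given the bigraph $H$, let $K=K(H)\ge 2$ be the constant provided by \cref{thm:ultra}, set $\eta:=\eps^2/16$, and take $b:=6K+2$; we show this $b$ works. So fix $\eps\in(0,\tfrac12)$ and a graph $G$ with $\abs G\ge\eps^{-b}$ and no bi-induced copy of $H$. Applying \cref{thm:ultra} with $\eta$ in place of $\eps$ yields $L\in[\eta^{-1},\eta^{-K}]$ and an equipartition $V(G)=V_1\cup\cdots\cup V_L$ in which at most an $\eta$ fraction of the pairs $\{i,j\}$ fail to be weakly $\eta$-pure. Since $L\le\eta^{-K}$ and $\abs G\ge\eps^{-b}$ with $b$ large relative to $K$, each $\abs{V_i}\ge\lfloor\abs G/L\rfloor$ is comfortably large; and since $L\ge\eta^{-1}$ we get $\abs{V_i}\le\lceil\abs G/L\rceil\le\eta\abs G+1\le\eps^2\abs G$, which already secures the first bullet. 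For the pruning, let $\Gamma$ be the graph on $[L]$ with $i\sim j$ exactly when $V_i,V_j$ are not weakly $\eta$-pure; then $\Gamma$ has at most $\eta\binom{L}{2}$ edges, hence (by Tur\'an's theorem, say) an independent set $I$ with $\abs I\ge L/(\eta L+1)\ge \tfrac{1}{2\eta}=8\eps^{-2}$, using $\eta L\ge 1$. As $8\eps^{-2}\ge\lceil\eps^{-1}\rceil$, fix $J\subseteq I$ with $\abs J=\ell:=\lceil\eps^{-1}\rceil\ge\eps^{-1}$; and for each two-element subset $\{i,j\}$ of $J$, choose $\chi(i,j)=\chi(j,i)\in\{G,\overline G\}$ in which $V_i,V_j$ are weakly $\eta$-sparse.

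Now clean and equalise. Fix $i\in J$. For each $j\in J\setminus\{i\}$, the pair $V_i,V_j$ spans at most $\eta\abs{V_i}\abs{V_j}$ edges in $\chi(i,j)$, so fewer than $\sqrt{\eta}\,\abs{V_i}$ vertices of $V_i$ have more than $\sqrt{\eta}\,\abs{V_j}$ neighbours in $V_j$ in the graph $\chi(i,j)$; call such vertices \emph{$j$-bad}. Delete from $V_i$ every vertex that is $j$-bad for some $j\in J\setminus\{i\}$: since $\sqrt{\eta}=\eps/4$ and $\abs J\le 2\eps^{-1}$, this removes fewer than $2\eps^{-1}\cdot\tfrac{\eps}{4}\abs{V_i}=\tfrac12\abs{V_i}$ vertices, leaving $B_i^0\subseteq V_i$ with $\abs{B_i^0}>\tfrac12\abs{V_i}$. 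Put $w:=\min_{i\in J}\abs{B_i^0}$, pick $B_i\subseteq B_i^0$ with $\abs{B_i}=w$ for each $i\in J$, and relabel so that $J=[\ell]$. For distinct $i,j\in[\ell]$ and $v\in B_i$: as $v$ is not $j$-bad it has at most $\sqrt{\eta}\,\abs{V_j}$ neighbours in $V_j$ in $\chi(i,j)$, hence at most that many in $B_j$; and since $\abs{V_j}\le\lceil\abs G/L\rceil<2w+1\le 3w$ (using $w>\tfrac12\lfloor\abs G/L\rfloor\ge 1$), this count is at most $3\sqrt{\eta}\,w=\tfrac34\eps w<\eps\abs{B_j}$. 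Thus $B_i,B_j$ are $\eps$-sparse to each other in $\chi(i,j)$, which is the second bullet. Finally $w\le\min_i\abs{V_i}\le\eta\abs G\le\eps^2\abs G$, $\ell\ge\eps^{-1}$, and $w>\tfrac12\lfloor\abs G/L\rfloor\ge\abs G/(4L)\ge\eta^{K}\abs G/4=\eps^{2K}\abs G/(4\cdot 16^{K})\ge\eps^{b}\abs G$, the last step because $\eps<\tfrac12$ forces $\eps^{\,b-2K}\le 2^{-(4K+2)}=1/(4\cdot 16^{K})$. Hence $(B_1,\dots,B_\ell)$ is an $(\eps^{-1},\eps^{b}\abs G)$-blockade of the required kind, with all blocks of the common size $w$.

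The one genuinely delicate step is the cleaning. \cref{thm:ultra} only controls the \emph{average} edge density across each pair $(V_i,V_j)$, whereas the conclusion demands that \emph{every} vertex of $B_i$ have few neighbours in $B_j$, simultaneously over all $\binom{\ell}{2}$ retained pairs. What makes this achievable is that the number $\ell\approx\eps^{-1}$ of blocks we keep is small compared with $1/\sqrt{\eta}\approx 4\eps^{-1}$, so a union bound over the ``locally dense'' vertices discards only a constant fraction of each block and the degrees then upgrade correctly after truncating to the common width; the rest is bookkeeping, for which $\abs G\ge\eps^{-b}$ supplies the slack, and since every deletion takes place inside the $V_i$, the final blocks remain pairwise disjoint. (A minor point is that truncating to $w=\min_i\abs{B_i^0}$ could in principle shrink a block below half its original part and break the degree bound; but the equipartition makes the $\abs{V_i}$ differ by at most one, so $\abs{V_j}\le 3w$ as used above, and the bound survives.)
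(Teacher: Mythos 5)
Your proof is correct and follows essentially the same route as the paper's: apply the ultra-strong regularity lemma (\cref{thm:ultra}) with a small polynomial-in-$\eps$ parameter, pass via Tur\'an's theorem to a subfamily of blocks that are pairwise weakly pure, delete from each block the vertices with anomalously many (non)neighbours in some other block by a Markov/averaging argument, and truncate to a common size. The only differences are cosmetic parameter choices (you use $\eta=\eps^2/16$ and a degree threshold $\sqrt\eta$ where the paper uses $\eps^4$ and threshold $\tfrac12\eps$) and the order in which you equalise block sizes (at the end, rather than via the equipartition pigeonhole up front).
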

	\begin{proof}
		Let $K\ge2$ be given by \cref{thm:ultra};
		we claim that $b:=5K$ satisfies the theorem.
		To this end, by \cref{thm:ultra} with $\eps^4$ in place of $\eps$,
		if $G$ is a graph with no bi-induced copy of $H$, then there is an equipartition $V(G)=V_1\cup\cdots\cup V_L$ with 
		$L\in[\eps^{-4},\eps^{-4K}]$ such that all but at most an $\eps^4$ fraction of the pairs $(V_i,V_j)$ are weakly $\eps^4$-pure.
		By Tur\'an's theorem, there exists $J\subseteq[L]$ with $\abs J\ge\frac12\eps^{-4}$ such that $(V_i,V_j)$ is weakly $\eps^4$-pure for all distinct $i,j\in J$.
		Then there exists $I\subseteq J$ with $\abs I\ge \frac12\abs J\ge\frac14\eps^{-4}\ge\eps^{-1}$ such that $\abs{V_i}=\abs{V_j}$ for all distinct $i,j\in I$.
		Let $\ell:=\ceil{\eps^{-1}}$; it follows that there exists $I\subseteq J$ with $I=[\ell]$ such that $\abs{V_i}=\abs{V_j}$ for all distinct $i,j\in I$.
		For every $i\in I$, let $B_i$ be the set of vertices $v$ in $V_i$ such that for every $j\in I\setminus\{i\}$,
		\begin{itemize}
			\item $v$ has at most $\frac12\eps\abs{V_j}$ neighbours in $V_j$ if $(V_i,V_j)$ is weakly $\eps^4$-sparse in $G$; and
			
			\item $v$ has at most $\frac12\eps\abs{V_j}$ nonneighbours in $V_j$ if $(V_i,V_j)$ is weakly $\eps^4$-sparse in $\overline G$.
		\end{itemize}
		Then
		\[\abs{B_i}\ge \abs{V_i}-(\ell-1)\cdot 2\eps^3\abs{V_i}
		\ge\abs{V_i}-\eps^{-1}\cdot2\eps^3\abs{V_i}
		=(1-2\eps^2)\abs{V_i}
		\ge\abs{V_i}/2\]
		and by removing vertices if necessary we may assume that $\abs{B_i}=\ceil{\abs{V_i}/2}=\ceil{m/2}$ where $m=\abs{V_i}$.
		It follows that for all distinct $i,j\in I$, $B_i,B_j$ are $\eps$-sparse to each other in $G$ or $\overline G$.
		Also, since 
		$$m\ge \floor{\abs G/L}\ge\abs G/(2L)\ge \eps^{4K+1}\abs G$$ 
		(as $\abs G\ge \eps^{-b}=\eps^{-5K}$),
		it follows that for each $i\in I$,
		$$\abs {B_i}\ge m/2\ge \eps^{4K+2}\abs G\ge \eps^{5K}\abs G=\eps^b\abs G$$
		and 
		$$\abs{B_i}\le m\le \ceil{\abs G/L}\le 2\abs G/L\le 2\eps^4\abs G\le\eps^2\abs G.$$
		This proves \cref{thm:cond}.  
	\end{proof}

	\section{Iterative sparsification}\label{subsec:sparsify}

In this section we give the proof of \cref{thm:vcpolyrodl}.
We recall that a graph is {\em $\eps$-restricted} if either the graph or its complement is $\eps$-sparse.
 In view of \cref{lem:vcbip}, \cref{thm:vcpolyrodl} 
 is equivalent to:
	\begin{theorem}
		\label{thm:main}
		For every bigraph $H$, there exists $C>0$ such that for every $\eps\in(0,\frac12)$, every graph $G$ with no bi-induced copy of $H$ contains an $\eps$-restricted induced subgraph with at least $\eps^C\abs G$ vertices.
	\end{theorem}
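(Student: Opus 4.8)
The plan is to feed the nearly‑pure blockade produced by \cref{thm:cond} into an iterative‑sparsification scheme of the kind developed in \cite{density3,density4}, the new ingredient being a reduction of the blockade's pattern to a simpler graph. First a reduction: it suffices to produce, for some constant $b'=b'(H)$, either an $\eps$‑restricted induced subgraph of $G$ of size at least $\eps^{b'}\abs G$, or a blockade $\mac B=(B_1,\ldots,B_\ell)$ in $G$ of length $\ell\ge 3\eps^{-1}$ and width at least $\eps^{b'}\abs G$ whose blocks all have the same size and for which every two blocks are $(\eps/3)$‑sparse to each other in $G$ — or the same with $\overline G$ in place of $G$. Indeed, setting $U=B_1\cup\cdots\cup B_\ell$, each $v\in B_i$ has at most $\abs{B_i}-1\le\abs U/\ell\le(\eps/3)\abs U$ neighbours inside its own block and at most $(\eps/3)\abs{B_j}$ neighbours in each other block, so $\deg_U(v)\le(2\eps/3)\abs U\le\eps\abs U$; hence $G[U]$ (or $\overline G[U]$) is $\eps$‑sparse, and $\abs U\ge 3\eps^{b'-1}\abs G\ge\eps^{b'}\abs G$. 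Graphs with $\abs G<\eps^{-b'}$ are dealt with by a single vertex.

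Next I would apply \cref{thm:cond} with $\mu:=\eps^{a}$ in place of $\eps$, for a large constant $a=a(H)$ to be fixed at the end, obtaining an $(\mu^{-1},\mu^{b}\abs G)$‑blockade $(B_1,\ldots,B_{\ell_0})$ with $\ell_0=\lceil\mu^{-1}\rceil$, all blocks of a common size $m\in[\mu^{b}\abs G,\mu^2\abs G]$, such that every pair $B_i,B_j$ is $\mu$‑sparse to each other in $G$ or in $\overline G$. Encode this as a graph $R$ on $[\ell_0]$ with $ij\in E(R)$ precisely when $B_i,B_j$ are $\mu$‑sparse to each other in $\overline G$. The point is that $R$ controls the between‑block structure: if $I\subseteq[\ell_0]$ is such that $R[I]$ has maximum degree at most $(\eps/9)\abs I$, then $R[I]$ contains an independent set $I'$ with $\abs{I'}\ge\abs I/\big((\eps/9)\abs I+1\big)$, so $\abs{I'}\ge 3\eps^{-1}$ as soon as $\abs I\ge 9\eps^{-1}$, and then $(B_i)_{i\in I'}$ is a blockade all of whose pairs are $\mu$‑sparse (hence $(\eps/3)$‑sparse, since $\mu\le\eps/3$) to each other in $G$ — exactly the object required by the reduction; symmetrically for $\overline R$ and $\overline G$. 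Thus everything comes down to a quantitative R\"odl‑type statement for $R$: finding $I\subseteq[\ell_0]$ with $\abs I\ge 9\eps^{-1}$ such that $R[I]$ or $\overline R[I]$ has maximum degree at most $(\eps/9)\abs I$.

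This is where bounded VC‑dimension re‑enters, and where the iteration lives. Because $R$ is obtained by contracting the blocks of a long, wide, nearly‑pure blockade of the $H$‑free graph $G$, it again has bounded VC‑dimension — indeed no bi‑induced copy of some bigraph $H'=H'(H)$ — since a bi‑induced copy of a large ``shattering pattern'' in $R$, together with a suitable choice of one representative vertex per involved block (most choices work, by averaging over the errors permitted in \cref{thm:cond}), yields a bi‑induced copy of such a pattern, hence of $H$, in $G$. This is the role of the nearly‑pure‑blockade analysis of \cref{sec:reg}. Now $\abs R=\ell_0\le\mu^{-1}+1$ is small and still of bounded VC‑dimension, so one can apply the argument recursively to $R$ (with the same $\eps$) and obtain the R\"odl‑type structure required above; choosing $a$ large enough guarantees that the subgraph returned for $R$ is long enough to survive the independent‑set step, and that lifting it through the blockade (where each $\mu$‑sparse pair is exactly good enough, as in the displayed computation of the reduction) costs only a further fixed power of $\eps$, so that the final exponents compose to $C=C(H)=O_H(1)$.

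The hard part — and the reason \cref{sec:reg} and the iterative‑sparsification bookkeeping are needed — is making the last paragraph quantitatively honest. One must (i) pin down the precise bigraph $H'$ controlling the pattern, which forces the ``sparse to each other'' constants and is why \cref{thm:cond} is stated with so much slack; and (ii) argue that the recursion terminates after boundedly many rounds with the accumulated size loss still a single fixed power of $\eps$ — for which one tracks a complexity parameter of the pattern (bounded in terms of the VC‑dimension of $G$) that does not increase along the iteration, the base case being graphs that are complete or edgeless and therefore $0$‑restricted outright. I expect step (ii) — controlling the interaction between the width lost at each application of \cref{thm:cond} and the length needed to make a blockade's union $\eps$‑restricted, uniformly over the rounds — to be the genuine obstacle, exactly as in \cite{density3,density4}.
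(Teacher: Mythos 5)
Your high-level outline is close to the paper's, and several ingredients are exactly right: you apply \cref{thm:cond} to get a nearly-pure blockade, you encode the between-block pattern in a graph $R$ on $[\ell_0]$, you correctly observe that $R$ again has bounded VC-dimension (in fact $R$ has no bi-induced copy of $H$ itself, as the paper shows with a one-line random-sampling argument, so you do not need a separate pattern $H'$), and your opening "reduction" — a long, wide, all-pairs-sparse blockade whose union is $\eps$-restricted — is the same computation the paper performs at the end.

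The gap is in how you close the recursion. You propose to "apply the argument recursively to $R$ (with the same $\eps$)", tracking a "complexity parameter \ldots that does not increase." A parameter that does not increase cannot ground a recursion; it must strictly decrease, and here there is no candidate: $R$ forbids the same bigraph $H$, so the pattern does not shrink, and if you instead try induction on $\abs G$ the numbers do not close. Concretely, from $\abs R\approx\eps^{-a}$ and the (assumed) inductive bound $\abs I\ge(\eps/9)^C\abs R$, the Tur\'an step needs $\abs I\ge 9\eps^{-1}$, which for small $\eps$ forces $a\ge C+1$; meanwhile the lift needs $\abs{I'}\cdot m\ge\eps^{C}\abs G$ with $m\ge\eps^{ab}\abs G$, and the trivial case must cover $\abs G<\eps^{-ab}$, both forcing $C\ge ab$. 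Together this gives $a\ge ab+1$, impossible since $b\ge1$. Increasing $a$ makes $C$ grow proportionally and does not help. So the obstacle you flag in your last paragraph is not bookkeeping — as formulated, the scheme is circular.

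The paper breaks this circularity by first establishing a \emph{stronger} intermediate statement — the \erh{} property (a clique or stable set of size $n^{1/a}$, i.e.\ a $0$-restricted set, not merely an $\eps$-restricted one) for graphs with no bi-induced copy of $H$ — and only then deducing \cref{thm:main}. The \erh{} property is proved in \cref{lem:eh,lem:polyrodl} by induction on $\abs H$, and here is where the forbidden pattern actually shrinks: in \cref{lem:vc1} the reduced graph $J$ is built relative to a single \emph{mixed vertex} $w$, so that a bi-induced pattern in $J$ together with $w$ itself would give a bi-induced $H$ in $F$; hence $J$ has no bi-induced copy of $H\setminus v$, and the inductive \erh{} bound for $H\setminus v$ cleans up $J$. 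The iterative-sparsification loop (\cref{lem:sparsify}) iterates not over reduced graphs but over the sparsity parameter $y$ of induced subgraphs of $G$ itself. With the \erh{} property for $H$ in hand, \cref{thm:main} then follows from one application of \cref{thm:cond}: a clique or stable set in the reduced graph gives a genuinely pure blockade, which is what your lift step actually needs.
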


The proof of \cref{thm:main} uses the framework of {\em iterative sparsification}, which was introduced in earlier papers from this series~\cite{density3,density4} (and will be used also in \cite{density7}).   
The goal is to find an $\eps$-restricted induced subgraph of size at least $\operatorname{poly}(\eps)|G|$.  Rather than doing this in one step, we instead attempt to move through a sequence of induced subgraphs that are successively sparser (in $G$ or its complement): given a $y$-restricted induced subgraph $F$, we search for an induced subgraph that is $\operatorname{poly}(y)$-restricted and is at most a $\operatorname{poly}(y)$ factor smaller.  Provided we can start the process, and do not get stuck on the way, the following lemma shows that this gives the required subset. 

	\begin{lemma}
		\label{lem:sparsify}
		Let $c\in(0,1)$, $a\ge2$, and $t\ge1$.
		Suppose that $x\in(0,c)$ and $G$ is a graph satisfying:
		\begin{itemize}
			\item there is a $c$-restricted induced subgraph of $G$ with at least $c^t\abs G$ vertices; and
			
			\item for every $y\in[x,c]$ and every $y$-restricted induced subgraph $F$ of $G$ with $\abs F\ge y^{2t}\abs G$,
			there is a $y^a$-restricted induced subgraph of $F$ with at least $y^{at}\abs F$ vertices.
		\end{itemize}
		Then $G$ contains an $x$-restricted induced subgraph with at least $x^{2at}\abs G$ vertices.
	\end{lemma}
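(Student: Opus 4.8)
The plan is to iterate the second hypothesis, producing a nested sequence $F_0\supseteq F_1\supseteq\cdots$ of induced subgraphs of $G$ together with ``restriction levels'' $y_0>y_1>\cdots$. Take $F_0$ to be the $c$-restricted induced subgraph supplied by the first bullet, with $\abs{F_0}\ge c^t\abs G$, and set $y_0:=c$. Given a $y_i$-restricted induced subgraph $F_i$ with $y_i\in[x,c]$, apply the second bullet (after verifying its size hypothesis; see below) to obtain a $y_i^a$-restricted induced subgraph $F_{i+1}$ of $F_i$ with $\abs{F_{i+1}}\ge y_i^{at}\abs{F_i}$, and put $y_{i+1}:=y_i^a$. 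Thus $y_i=c^{a^i}$, which is strictly decreasing with limit $0$ (as $0<c<1$ and $a\ge 2$), so there is a least index $k$ with $y_k<x$; since $x<c=y_0$ we have $k\ge 1$, and since $y_{k-1}\ge x$ we get $y_k=y_{k-1}^a\ge x^a$. The subgraph $F_k$ will be the one we want.

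The one thing that needs care is the invariant
\[
\abs{F_i}\ge y_i^{2t}\abs G\qquad(0\le i\le k),
\]
which serves a double purpose: it is exactly the size hypothesis ``$\abs F\ge y^{2t}\abs G$'' needed to run the $(i{+}1)$-st step, and it yields the final bound. I would prove it by induction on $i$. For $i=0$ it holds because $c^t\ge c^{2t}$. For the inductive step, from $\abs{F_i}\ge y_i^{2t}\abs G$ we get
\[
\abs{F_{i+1}}\ge y_i^{at}\abs{F_i}\ge y_i^{(a+2)t}\abs G=y_{i+1}^{(a+2)t/a}\abs G\ge y_{i+1}^{2t}\abs G,
\]
where the last inequality uses $0<y_{i+1}<1$ together with $(a+2)/a\le 2$, i.e.\ $a\ge 2$. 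One also checks that each step is legitimate: for $i<k$ we have $y_i\le y_0=c$ by monotonicity and $y_i\ge x$ by minimality of $k$, so $y_i\in[x,c]$, while the required size condition is precisely the invariant.

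Finally, $F_k$ is $y_k$-restricted with $y_k\le x$, hence $x$-restricted (a $y$-sparse graph with $y\le x$ is $x$-sparse), and by the invariant together with $y_k\ge x^a$,
\[
\abs{F_k}\ge y_k^{2t}\abs G\ge (x^a)^{2t}\abs G=x^{2at}\abs G,
\]
as required. The main obstacle is purely the exponent bookkeeping that makes the invariant self-reproducing — this is exactly what the hypothesis $a\ge 2$ is for (equivalently $\sum_{j\ge 0}a^{-j}\le 2$) — and once that is arranged, checking that the iteration starts ($k\ge 1$) and never stalls is immediate.
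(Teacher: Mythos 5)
Your proof is correct and is essentially the same argument as the paper's: the paper phrases the iteration as a minimality argument (take $y\in[x^a,c]$ minimal for which a $y$-restricted induced subgraph of size $\ge y^{2t}\abs G$ exists, and derive a contradiction from the second hypothesis if $y\ge x$), whereas you unroll the same iteration explicitly as a nested sequence $F_0\supseteq F_1\supseteq\cdots$ with $y_i=c^{a^i}$. The invariant $\abs{F_i}\ge y_i^{2t}\abs G$ and the role of $a\ge 2$ in making it self-reproducing match the paper's bookkeeping exactly.
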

	\begin{proof}
		By the first condition of the lemma, there exists $y\in[x^a,c]$ minimal such that $G$ has a $y$-restricted induced subgraph $F$ with $\abs F\ge y^{2t}\abs G$.
		If $y\ge x$, then by the second condition of the lemma and since $a\ge2$, $F$ has a $y^a$-restricted induced subgraph with at least $y^{at}\abs F\ge y^{at+2t}\abs G\ge y^{2at}\abs G$ vertices;
		but this contradicts the minimality of $y$ since $x^a\le y^a<y$.
		Thus $x^a\le y<x$; and so $F$ is $x$-restricted, and $\abs F\ge y^{2t}\abs G\ge x^{2at}\abs G$.
		This proves \cref{lem:sparsify}.
	\end{proof}

We will find a subgraph satisfying the first bullet by using R\"odl's theorem (\cref{thm:rodl}), with a suitable $t=t(c)$.  However, finding a subgraph that satisfies the second bullet is more challenging, and we need to allow for an alternative `good' outcome.  We will show in \cref{lem:polyrodl} that if we get stuck then we can instead
find a large complete or anticomplete blockade (note that this is much stronger than being pure; and the blockades given by \cref{thm:cond} are only $\eps$-pure).
Let us show that, if we can find sufficiently large blockades, then we can obtain the 
\erh{} property; we will return to the (stronger) polynomial R\"odl property at the end of the section.
	\begin{lemma}
		\label{lem:eh}
		Let $\mac C$ be a hereditary class of graphs.
		Suppose that there exists $d\ge2$ such that for every $x\in(0,2^{-d})$ and every $G\in\mac C$,
		either:
		\begin{itemize}
			\item $G$ has an $x$-restricted induced subgraph with at least $x^d\abs G$ vertices; or
			
			\item there is a complete or anticomplete $(k,\abs G/k^d)$-blockade in $G$, for some $k\in[2,1/x]$.
		\end{itemize}
		Then there exists $a\ge2$ such that every $n$-vertex graph in $\mac C$ has a clique or stable set of size at least~$n^{1/a}$.
	\end{lemma}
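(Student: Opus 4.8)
The plan is to iterate the dichotomy. Fix a graph $G\in\mac C$ with $n$ vertices, and choose some threshold $x$ to be optimized; I will apply the hypothesis with this $x$. If the first outcome occurs, we get an $x$-restricted induced subgraph $G'$ with $|G'|\ge x^d n$. An $x$-restricted graph has a vertex of degree at most $x|G'|$ in $G'$ or its complement, so greedily we can extract a clique or stable set of size at least roughly $\log_{1/x}|G'|$; more carefully, an $x$-sparse graph on $m$ vertices contains a stable set of size $\ge (1-x)/x \ge 1/(2x)$ trivially, but to get a \emph{polynomial} bound we instead recurse: inside $G'$, pass to the independent-set side, delete a vertex and all its neighbours, and repeat. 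Since at each step we keep a $(1-x)$-fraction, after $k$ steps we still have $\ge (1-x)^k |G'|$ vertices, so this alone does not give polynomial size. The right move is therefore not to recurse on the first outcome but to recurse on the structure it hands us, or to recurse on the blockade in the second outcome.

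So here is the cleaner approach. Define $f(n)$ to be the largest guaranteed homogeneous-set size over $n$-vertex graphs in $\mac C$, and prove $f(n)\ge n^{1/a}$ for suitable $a$ by showing $f$ satisfies a supermultiplicative-type recursion. Apply the hypothesis with a fixed small constant $x=x_0<2^{-d}$ (say $x_0$ chosen so that $1/x_0$ is a large integer $k_0$). In the first outcome we obtain an $x_0$-restricted induced subgraph $F$ with $|F|\ge x_0^d n$; an $x_0$-restricted graph on $m$ vertices is a disjoint-ish union in the following sense — on the sparse side it has a stable set of size $\ge (1-x_0)m$ minus a bit, or more usefully it has, by a standard argument, an induced subgraph that is a union of $\lceil 1/x_0\rceil$ pairwise "far" pieces; the quickest rigorous route is: an $x_0$-sparse graph contains an induced matching-free large set, hence a stable set of linear size, giving a clique-or-stable set of size $\Omega(m)=\Omega(n)$, which already beats $n^{1/a}$ for $a$ large. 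In the second outcome we have a complete or anticomplete $(k,n/k^d)$-blockade $(B_1,\dots,B_k)$ with $k\le 1/x_0$ bounded; since the blockade is complete or anticomplete, a clique (resp. stable set) in the "union" side can be assembled from the pieces: taking a homogeneous set of the \emph{right} type inside each $B_i$ and combining across an anticomplete (resp. complete) blockade yields a homogeneous set of size $\sum_i (\text{homog.\ in }B_i)$, i.e. at least $k\cdot f(n/k^d)$ — except we must also handle the case where the homogeneous sets inside the $B_i$ are of the wrong type for the blockade, which costs only a constant factor $k$ by pigeonhole, and the "within-$B_i$, across blockade" combination on the matching side gives $f(n/k^d)$ directly. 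Either way we get
$$
f(n)\ \ge\ \min\Bigl(\Omega(n),\ \tfrac{k}{2}\,f\!\bigl(n/k^d\bigr)\Bigr)
$$
for some integer $k=k(x_0)$ with $2\le k\le 1/x_0$ that may depend on $G$ but lies in a bounded range.

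Unrolling the recursion: write $n_0=n$, $n_{i+1}=n_i/k_i^{d}$ where $2\le k_i\le 1/x_0$, and stop when $n_i$ drops below a constant $n^\ast$. Then $f(n)\ge \bigl(\prod_i \tfrac{k_i}{2}\bigr)\cdot f(n^\ast)\ge \bigl(\prod_i \tfrac{k_i}{2}\bigr)$. We have $n\le n^\ast\prod_i k_i^{d}$, so $\prod_i k_i \ge (n/n^\ast)^{1/d}$, while $\prod_i (k_i/2)\ge \prod_i k_i^{1-\log 2/\log 2}$... this exponent bookkeeping is exactly where care is needed: since $k_i\ge 2$ we have $k_i/2\ge k_i^{1-1/\log_2 k_i}\ge k_i^{\theta}$ for $\theta=1-1/\log_2 k_i$, which is bad when $k_i=2$. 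To avoid this, first apply the hypothesis with $x_0$ small enough that $1/x_0\ge 4$, and observe that in the second outcome one may always \emph{pad} the blockade up to length exactly $k\ge 4$ or, if $k<4$, simply invoke the first-outcome bound instead (a $(2,n/2^d)$- or $(3,\dots)$-blockade that is complete/anticomplete directly yields a homogeneous set of size $n/k^d = \Omega(n)$ on one side by taking a whole block). Thus we may assume $k\ge 4$, so $k_i/2\ge k_i^{1/2}$, giving $\prod_i(k_i/2)\ge \prod_i k_i^{1/2}\ge (n/n^\ast)^{1/(2d)}$, hence $f(n)\ge c' n^{1/(2d)}$ for a constant $c'>0$, and then $f(n)\ge n^{1/a}$ for $a:=3d$ (absorbing $c'$ by adjusting $a$, valid for $n$ large; small $n$ are handled since $f(n)\ge 1$ always and we may enlarge $a$).

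The main obstacle is the combination step inside the blockade case — ensuring that a complete-or-anticomplete blockade actually lets us \emph{add up} homogeneous sets from the blocks rather than just pass to one block. This is where the complete/anticomplete (not merely pure) hypothesis is essential: across an anticomplete blockade, a union of stable sets (one per block) is stable; across a complete blockade, a union of cliques is a clique. The pigeonhole loss (a given block's best homogeneous set might be a clique when we wanted a stable set) only costs a factor of $k$ and is harmless. A secondary point requiring attention is keeping the branching factor $k$ bounded away from the degenerate value where $k/2$ is too close to $1$; the fix above (treat small $k$ via the whole-block bound) resolves it cleanly.
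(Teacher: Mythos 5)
Your high-level plan---recurse through the complete/anticomplete blockade and handle the first outcome directly---is the right one, but two of the key steps you sketch do not hold, and both are exactly where the paper does something different.

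First, the first outcome. With a fixed constant $x_0$, an $x_0$-sparse graph on $m$ vertices has maximum degree at most $x_0 m$ and hence, by Tur\'an, a stable set of size at least $m/(x_0 m + 1)\approx 1/x_0$, a \emph{constant} independent of $m$; it need not have a stable set of size $\Omega(m)$ (a disjoint union of about $1/x_0$ cliques of size about $x_0 m$ is a counterexample). So your claim that the first outcome already gives a homogeneous set of size $\Omega(n)$ is false, and your recursion has no useful base. The paper's fix is to let $x$ depend on $n$: it sets $x:=|G|^{-1/(2d)}$, so that the $x$-restricted subgraph $S$ from the first outcome has $|S|\ge x^d|G|\ge 1/x$, and Tur\'an inside $G[S]$ then yields a clique or stable set of size at least $1/(2x)>|G|^{1/(4d)}$, which is already polynomial.

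Second, the combination step across the blockade. The inductive hypothesis hands you a clique \emph{or} stable set of size $f(n/k^d)$ in each block, but you do not control which; in the bad case the blockade is anticomplete and every block happens to supply only a clique, so the pieces cannot be stitched together and you get merely $f(n/k^d)$, not $(k/2)f(n/k^d)$. The loss is not ``a constant factor $k$ by pigeonhole''---it can annihilate the gain entirely, and the recursion $f(n)\ge(k/2)f(n/k^d)$ you write down is not established. The paper sidesteps this by inducting on the size of the largest induced \emph{cograph} rather than the largest clique or stable set. Cographs are closed under both joins and disjoint unions, so across a complete \emph{or} anticomplete blockade one can always add up cograph sizes from all $k$ blocks with no type mismatch, yielding $k\cdot(|G|/k^d)^{1/(2d^2)}=k^{1-1/(2d)}|G|^{1/(2d^2)}>|G|^{1/(2d^2)}$, and a single square root at the very end converts the cograph into a clique or stable set. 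With this replacement your ``small $k$'' concern also disappears: $k^{1-1/(2d)}>1$ already for $k=2$ and $d\ge 2$, so no padding or case-split on $k$ is required.
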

	\begin{proof}
		A {\em cograph} is a graph with no induced four-vertex path;
		and it is well-known that every $k$-vertex cograph has a clique or stable set of size at least $k^{1/2}$.
		Thus, it suffices to prove by induction that every $G\in\mac C$ contains an induced cograph of size at least $\abs G^{1/(2d^2)}$.
		We may assume $\abs G> 2^{2d^2}$.
		Let $x:=\abs G^{-1/(2d)}\in(0,2^{-d})$.
		By the hypothesis, either:
		\begin{itemize}
			\item there exists $S\subseteq V(G)$ with $\abs S\ge x^d\abs G$ such that $G[S]$ is $x$-restricted; or
			
			\item there is a complete or anticomplete $(k,\abs G/k^d)$-blockade in $G$, for some $k\in[2,1/x]$.
		\end{itemize}
		
		If the first bullet holds, then since $\abs S\ge x^d\abs G\ge x^{-1}$,
		Tur\'an's theorem gives a clique or stable set in $G[S]$ of size at least
		$(2x)^{-1}>x^{-1/2}=\abs G^{1/(4d)}$.
		If the second bullet holds, then since $k\ge2$, by applying induction to each block in the blockade to obtain a cograph and combining them, we see that $G$ contains an induced cograph of size at least
		$$k(\abs G/k^d)^{1/(2d^2)}=k^{1-1/(2d)}\abs G^{1/(2d^2)}>\abs G^{1/(2d^2)}.$$
		This proves \cref{lem:eh}.
	\end{proof}

 The key step in making the iterative sparsification strategy work is therefore to show that if the second bullet of \cref{lem:sparsify} does not hold then we can find a sufficiently large complete or anticomplete blockade. We will show this in 
\cref{lem:vc3}: given a $y$-restricted graph $F$ with no bi-induced $H$, we will prove that we can either pass to the desired $\operatorname{poly}(y)$-restricted subgraph or find a complete or anticomplete blockade whose length and width depend polynomially on $y$.  We will argue by induction on $|H|$, and grow the blockade one block at a time: the first step (\cref{lem:vc1}) is to find a complete or anticomplete pair $(A,B)$, where $A$ has size $\operatorname{poly}(y)|F|$ and $B$ contains all but a small fraction of the rest of $F$; we then (\cref{lem:vc2}) repeat the argument inside $B$, continuing until we obtain a blockade.  Restricted graphs can either be dense or sparse: we will assume for the moment that our restricted graph is sparse, and handle the dense case later by taking complements.
 
	

	\begin{lemma}
		\label{lem:vc1}
		Let $H$ be a bigraph, and let $v\in V(H)$.
		Let $b\ge2$ be given by \cref{thm:cond}.
		Assume there exists $a\ge2$ such that
		every $n$-vertex graph with no bi-induced copy of $H\setminus v$ contains a clique or stable set of size at least $n^{1/a}$.
		Let $y\in(0,1/\abs H)$, and let $F$ be a $y$-sparse graph with no bi-induced copy of $H$.
		Then either:
		\begin{itemize}
			\item $F$ has a $y^{2a}$-restricted induced subgraph with at least $y^{3ba^2}\abs F$ vertices; or
			
			\item there is an anticomplete pair $(A,B)$ in $F$ with $\abs A\ge y^{3ba^2}\abs F$ and $\abs B\ge (1-3y)\abs F$.
		\end{itemize}
	\end{lemma}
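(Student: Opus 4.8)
The plan is to apply \cref{thm:cond} to $F$ to obtain a weakly $\eps$-pure blockade, with $\eps$ a suitable power of $y$, and then to use the $y$-sparseness of $F$ together with a counting argument to extract either a good restricted subgraph or the desired anticomplete pair. Since $F$ is $y$-sparse, it has few edges, so in the blockade produced by \cref{thm:cond} the ``sparse to each other'' alternative will hold for most pairs of blocks; this is what will let us start building the anticomplete pair. Write $\eps:=y^{2a}$ (roughly; the exact exponent will be tuned so that the bounds $y^{3ba^2}$ come out). First I would check that $\abs F\ge \eps^{-b}$—if not, then $\abs F$ is bounded by a power of $y$ and the lemma is easy directly—so that \cref{thm:cond} applies and yields an $(\eps^{-1},\eps^b\abs F)$-blockade $(B_1,\dots,B_\ell)$ with $\abs{B_1}=\cdots=\abs{B_\ell}\le\eps^2\abs F$, each pair being $\eps$-sparse to each other in $F$ or $\overline F$.

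Next I would use the $y$-sparseness of $F$ to argue that, among these $\ell\ge\eps^{-1}$ blocks, most pairs $(B_i,B_j)$ are $\eps$-sparse in $F$ (not in $\overline F$): a pair that is $\eps$-sparse in $\overline F$ is nearly complete, hence contributes roughly $\abs{B_i}\abs{B_j}$ edges, and since the total number of edges of $F$ is at most $y\abs F^2/2$ while $\sum_{i}\abs{B_i}\ge\eps^{-1}\cdot\eps^b\abs F$ is a constant fraction of $\abs F$ (for the right exponents), there can be only a small fraction of such ``dense'' pairs. By Turán's theorem I can then pass to a sub-blockade, still of length $\Omega(\eps^{-1})$, in which every pair of blocks is $\eps$-sparse in $F$. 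Now consider the auxiliary graph on $B_1$ whose structure is governed by how a vertex of $B_1$ sees the union $B_2\cup\cdots\cup B_\ell$. Here I would split on a dichotomy, using the hypothesis on $H\setminus v$: look at the bipartite ``link'' picture at the vertex $v$ of $H$. Either many vertices of $B_1$ have small degree into $B_2\cup\cdots\cup B_\ell$—in which case setting $A$ to be those low-degree vertices and $B=B_2\cup\cdots\cup B_\ell$ (after deleting the few exceptional vertices) gives, after a further cleaning, an anticomplete pair with $\abs A\ge y^{3ba^2}\abs F$ and $\abs B\ge(1-3y)\abs F$—or else we can find inside one block a bi-induced copy of $H\setminus v$ built from the link, extended by $v$ mapped to a vertex of $B_1$, producing a bi-induced $H$ in $F$, a contradiction; in the remaining case one of the blocks (or a large piece of one) is $y^{2a}$-restricted, giving the first bullet.

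The main obstacle, I expect, is the second step: converting ``$F$ has few edges'' into the statement that we can find a large set $A\subseteq B_1$ all of whose vertices are anticomplete—not merely $\eps$-sparse—to a set $B$ covering all but a $3y$-fraction of $F$. Being $\eps$-sparse to each other only bounds \emph{average} degrees, so to get a genuinely anticomplete pair I will need to discard vertices of $B_1$ with any neighbours in the chosen part of $B$, and simultaneously discard vertices of $B$ with too many neighbours back in $A$; the delicate point is to do both while keeping $\abs A$ polynomially large in $\abs F$ and keeping $\abs B\ge(1-3y)\abs F$. This is where the induction on $\abs H$ via the hypothesis for $H\setminus v$ must be invoked: if the cleaning destroys too much of $A$, it is because some vertex of $B_1$ has many neighbours in a block $B_j$, and—combining this with a clique or stable set of size $\abs{B_j}^{1/a}$ in $F[B_j]$ guaranteed by the $H\setminus v$ hypothesis—we recover enough of a bipartite pattern to embed $H$, contradiction. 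Balancing these competing requirements, and checking that the exponent $3ba^2$ is large enough to absorb all the losses (the factor $\eps^b$ from \cref{thm:cond}, the Turán factor, and the cleaning), is the technical heart of the argument; everything else is bookkeeping.
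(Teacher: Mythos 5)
Your proposal has a structural flaw that cannot be repaired by tuning exponents: you want the anticomplete pair $(A,B)$ to have $A\subseteq B_1$ and $B\subseteq B_2\cup\cdots\cup B_\ell$, but the blockade produced by \cref{thm:cond} covers only a tiny fraction of $F$. Each block has size at most $\eps^2\abs F$ and there are roughly $\eps^{-1}$ of them, so $\abs{B_1\cup\cdots\cup B_\ell}\le\eps\abs F$, which is far below the required $(1-3y)\abs F$. Your claim that \textquotedblleft$\sum_i\abs{B_i}$ is a constant fraction of $\abs F$\textquotedblright\ is false for any choice of $\eps$ that is polynomial in $y$, so the second bullet of the lemma simply cannot be produced this way. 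Relatedly, the intermediate step where you try to deduce from $y$-sparseness that most block pairs are $\eps$-sparse in $F$ (rather than $\overline F$) does not go through: since the common block size $m$ satisfies $m\le\eps^2\abs F\ll y\abs F$, the bound $\deg_F(u)\le y\abs F$ gives no constraint at all on how a vertex of one block attaches to another block, and the edge-counting argument fails for the same reason.

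The paper's proof takes the opposite stance on where $B$ lives. It sets $D:=V(F)\setminus(B_1\cup\cdots\cup B_\ell)$ --- the vast majority of $F$ --- and aims to show that some block $B_i$ is anticomplete to almost all of $D$. It calls $w\in D$ \emph{mixed} on a block if $w$ has both a neighbour and a nonneighbour there, and proves the key claim that every $w\in D$ is mixed on at most $y\ell$ blocks. For this, given $w$ mixed on blocks indexed by a set of size $r\ge y\ell$, it forms an auxiliary graph $J$ on those $r$ indices with $ij\nin E(J)$ iff $B_i,B_j$ are $\eps$-sparse to each other in $F$. A bi-induced $H\setminus v$ in $J$ would lift (using $w$ to play $v$, and picking appropriate neighbours/nonneighbours of $w$ inside the blocks for $V_2(H)$, then cleaning inside the remaining blocks for $V_1(H)\setminus\{v\}$) to a bi-induced $H$ in $F$, a contradiction; hence $J$ has a polynomial-size clique or stable set, whose union of blocks is a $y^{2a}$-restricted induced subgraph --- the first outcome. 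Once the claim holds, averaging over blocks gives some $B_i$ on which at most $y\abs D$ vertices of $D$ are mixed; $y$-sparseness of $F$ bounds the number of $w\in D$ complete to $B_i$ by $y\abs F$; the blocks themselves occupy at most $y\abs F$; and the remaining $(1-3y)\abs F$ vertices are all anticomplete to $B_i$, giving $(A,B)=(B_i,\{\text{those vertices}\})$. Your instinct that the $H\setminus v$ hypothesis should be invoked via a link at $v$ is correct, but the auxiliary graph must live on the set of \emph{blocks} viewed from a single external vertex $w\in D$, and the set $B$ must come from $D$, not from the blocks.
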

	\begin{proof}  We have a sparse graph, and want to find an anticomplete pair $(A,B)$.  We will do this by first using \cref{thm:cond} (a corollary of the ultra-strong regularity lemma) to find a large, $\operatorname{poly}(y)$-pure blockade and then looking at how the rest of the graph attaches to it. 
 So let $\eps:=y^{3a^2}$, and suppose that the first outcome does not hold;
		then $\abs F>y^{-3ba^2}=\eps^{-b}$.  By
		\cref{thm:cond}, $F$ has a $(\eps^{-1},\eps^b\abs F)$-blockade $(B_1,\ldots,B_{\ell})$ with $\ell=\ceil{\eps^{-1}}$, such that:
		\begin{itemize}
			\item $\abs{B_1}=\cdots=\abs{B_\ell}\le2\eps^2\abs F$; and
			
			\item for all distinct $i,j\in[\ell]$,
			$B_i,B_j$ are $\eps$-sparse to each other in $F$ or $\overline F$.
		\end{itemize}
		
		Let $D:=V(F)\setminus(B_1\cup\cdots\cup B_{\ell})$ and $m:=\abs{B_1}$.
		For $i\in[\ell]$, a vertex $v\in D$ is {\em mixed} on $B_i$ if it has a neighbour and a nonneighbour in $B_i$.
		\begin{claimu}
			Every vertex in $D$ is mixed on at most $y\ell$ of the blocks $B_1,\ldots,B_{\ell}$.
		\end{claimu}
		\begin{subproof}
			Suppose there is a vertex $w\in D$ mixed on at least $y\ell$ of the blocks $B_1,\ldots,B_\ell$,
			say $B_1,\ldots,B_r$ where $r\ge y\ell\ge y\eps^{-1}=y^{1-3a^2}$.
			Let $J$ be the graph with vertex set $[r]$ where for all distinct $i,j\in[r]$,
			$ij\nin E(J)$ if and only if $B_i,B_j$ are $\eps$-sparse to each other in $F$.
			
			We claim that there is no bi-induced copy of $H\setminus v$ in $J$.
			Suppose that there is; and we may assume $V(H\setminus v)\subseteq V(J)$.
			We assume that $v\in V_1(H)$ without loss of generality.
			For each $u\in V_2(H)$, let $w_u$ be a neighbour of $w$ in $B_u$ if $uv\in E(H)$ and a nonneighbour of $w$ in 
			$B_u$ if $uv\nin E(H)$.
			For each $z\in V_1(H)\setminus\{v\}$ and $u\in V_2(H)$, since $uz\nin E(H)$ if and only if $uz\nin E(J)$ if and only if $B_u,B_z$ are $\eps$-sparse to each other in $F$,
			$w_u$ is adjacent in $F$ to at most $\eps\abs{B_z}$ vertices in $B_z$ if $uz\nin E(H)$ and nonadjacent in $G$ to at most $\eps\abs{B_z}$ vertices in $B_z$ if $uz\in E(H)$.
			Thus, for each $z\in V_1(H)\setminus\{v\}$, there are at least (note that $\eps\le y<1/\abs H$)
			\[\abs{B_z}-\abs {V_2(H)}\eps\abs {B_z}
			\ge \abs {B_z}-\abs H y\abs {B_z}>0\]
			vertices $z'\in B_z$ such that for every $u\in Y$, $w_uz'\in E(F)$ if and only if $uz\in E(H)$;
			let $w_z$ be such a vertex.
			It follows that $\{w\}\cup\{w_z:z\in V_1(H)\setminus\{v\}\}\cup\{w_u:u\in Y\}$ forms a bi-induced copy of $H$ in $F$, a contradiction.
			
			Thus, there is no bi-induced copy of $H\setminus v$ in $J$.
			By the choice of $a$, $J$ thus contains a clique or stable set $I$ with $\abs I\ge\abs J^{1/a}\ge (y^{1-3a^2})^{1/a}=y^{-3a+1/a}$.
			Let $S:=\bigcup_{i\in I}B_i$; then $\abs S=\abs Im$ since $\abs{B_i}=m$ for all $i\in I$.
			If $I$ is a stable set in $J$, then $F[S]$ has maximum degree at most
			\[m+\abs I\eps m=(1/\abs I+\eps)\abs Im
			\le (y^{3a-1/a}+y^{3a})\abs S\le 2y^{3a-1}\abs S\le y^{2a}\abs S;\]
			and similarly, if $I$ is a clique in $J$, then $\overline F[S]$ has maximum degree at most $y^{2a}\abs S$.
			Thus $F[S]$ is $y^{2a}$-restricted which is the first outcome of the lemma, a contradiction.
			This proves the claim.
		\end{subproof}
		
		Now, by the claim, there exists $i\in[\ell]$ such that there are at most $y\abs D$ vertices in $D$ that are mixed on $B_i$.
		Since $F$ is $y$-sparse, there are at most $y\abs F$ vertices in $D$ that are complete to $B_i$.
		Thus, since 
		\[\abs{B_1}+\cdots+\abs{B_\ell}\le \ell\eps^2\abs F\le 2\eps\abs F=2y^{3a^2}\abs F\le y\abs F,\]
		there are at least
		\[\abs F-y\abs D-y\abs F-(\abs{B_1}+\cdots+\abs{B_\ell})
		\ge (1-3y)\abs F\]
		vertices in $F$ with no neighbour in $B_i$.
		Because $\abs{B_i}\ge \eps^b\abs F=y^{3ba^2}\abs F$, the second outcome of the lemma holds.
		This proves \cref{lem:vc1}.
	\end{proof}
	We now apply \cref{lem:vc1} repeatedly to move from an anticomplete pair to an anticomplete blockade.
	\begin{lemma}
		\label{lem:vc2}
		Let $H$ be a bigraph, and let $v\in V(H)$.
		Let $b\ge2$ be given by \cref{thm:cond}.
		Assume there exists $a\ge2$ such that
		every $n$-vertex graph with no bi-induced copy of $H\setminus v$ contains a clique or stable set of size at least $n^{1/a}$.
		Let $0<y\le 2^{-12\abs H}$, and let $F$ be a $y$-sparse graph with no bi-induced copy of $H$.
		Then either:
		\begin{itemize}
			\item $F$ has a $y^a$-restricted induced subgraph with at least $y^{2ba^2}\abs F$ vertices; or
			
			\item there is an anticomplete $(y^{-1/2},y^{2ba^2}\abs F)$-blockade in $F$.
		\end{itemize}
	\end{lemma}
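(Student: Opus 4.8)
The plan is to iterate \cref{lem:vc1}, peeling off one block at a time while keeping a large ``reservoir'' $B$ that is anticomplete to every block already found; once the reservoir lets us extract $\ceil{y^{-1/2}}$ blocks we are done with the second outcome, and if at some stage \cref{lem:vc1} instead hands us a restricted subgraph we are done with the first outcome. The hypotheses on $b$, $a$, and $H\setminus v$ in \cref{lem:vc2} are exactly those required to invoke \cref{lem:vc1}, so they remain available at every step.

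Concretely, build a nested sequence $V(F)=B_0\supseteq B_1\supseteq B_2\supseteq\cdots$ together with blocks $A_1,A_2,\dots$ as follows. At step $i$, apply \cref{lem:vc1} to $F[B_{i-1}]$, but with a fixed power $z$ of $y$ --- say $z:=y^{3/5}$ --- in the role of the sparsity parameter rather than $y$ itself. The point of this rescaling is that \cref{lem:vc1} only produces a $z^{2a}$-restricted subgraph (or anticomplete-pair block) of relative size $z^{3ba^2}$, whereas \cref{lem:vc2} demands the exponents $a$ and $2ba^2$; these are compatible exactly when $y^{2/3}\le z\le y^{1/2}$ (the lower bound to get $z^{3ba^2}\ge y^{2ba^2}$, the upper bound to get $z^{2a}\le y^{a}$), and $z=y^{3/5}$ lies strictly inside this window, with room to spare. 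The hypotheses of \cref{lem:vc1} hold at step $i$: $F[B_{i-1}]$ has no bi-induced $H$; it is $z$-sparse provided $\abs{B_{i-1}}\ge\abs F/2$, since each of its vertices has at most $y\abs F\le 2y\abs{B_{i-1}}\le z\abs{B_{i-1}}$ neighbours in it; and $z<1/\abs H$ because $y\le 2^{-12\abs H}$.

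If at some step the first alternative of \cref{lem:vc1} holds, we get a $z^{2a}$-restricted --- hence $y^{a}$-restricted, as $z^{2a}=y^{6a/5}\le y^{a}$ --- induced subgraph of $F$ on at least $z^{3ba^2}\abs{B_{i-1}}\ge y^{2ba^2}\abs F$ vertices, which is the first alternative of \cref{lem:vc2}; stop. Otherwise \cref{lem:vc1} returns an anticomplete pair $(A_i,B_i)$ in $F[B_{i-1}]$ with $\abs{A_i}\ge z^{3ba^2}\abs{B_{i-1}}\ge y^{2ba^2}\abs F$ and $\abs{B_i}\ge(1-3z)\abs{B_{i-1}}$; record $A_i$ as the next block and continue inside $B_i$. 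If the process never stops, then $(A_1,\dots,A_{\ceil{y^{-1/2}}})$ is the blockade we want: since $A_i\subseteq B_{i-1}\subseteq B_j$ for all $j<i$ and $A_j$ is anticomplete to $B_j$ in $F$, the blocks are pairwise disjoint and pairwise anticomplete; each has at least $y^{2ba^2}\abs F$ vertices; and there are at least $y^{-1/2}$ of them.

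The one step that needs real care --- and the main obstacle --- is showing the reservoir never drops below $\abs F/2$, which underpins both the $z$-sparsity of $F[B_{i-1}]$ and the absorption of the constant factors above. From $\abs{B_i}\ge(1-3z)\abs{B_{i-1}}$ over at most $\ceil{y^{-1/2}}$ steps we get $\abs{B_i}\ge(1-3y^{3/5})^{\ceil{y^{-1/2}}}\abs F\ge\bigl(1-O(y^{1/10})\bigr)\abs F\ge\tfrac12\abs F$, using that $y\le 2^{-12\abs H}$ forces $y$ to be minuscule (the finitely many bigraphs $H$ on at most two vertices for which this crude estimate is not immediate are trivial anyway). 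The remaining arithmetic --- the factor $2$ between $\tfrac12\abs F$ and $\abs F$, the gap between $2y$ and $z$, and the gap between the exponents $\tfrac35\cdot3ba^2=\tfrac95ba^2$ and $2ba^2$, which is comfortably positive since $b\ge1$, $a\ge2$ and $y\le 2^{-12\abs H}$ --- I would dispose of with one short computation rather than dwell on it.
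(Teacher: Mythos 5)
Your proof is correct and follows essentially the same route as the paper's: iterating \cref{lem:vc1} with a suitably rescaled sparsity parameter to peel off anticomplete blocks one at a time, stopping either when a restricted subgraph appears or when $\lceil y^{-1/2}\rceil$ blocks have accumulated. The paper packages the iteration as a ``choose $n$ maximal'' argument and rescales by $y^{1/2}$ (keeping the reservoir above $y^{1/2}\abs F$), whereas you rescale by $y^{3/5}$ and keep the reservoir above $\abs F/2$; these are equivalent tunings of the same parameter trade-off, and your arithmetic (including the dispatch of the trivial cases $\abs H\le 2$, which is genuinely needed for your slightly tighter geometric-decay estimate) checks out.
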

	\begin{proof}
		Suppose that the second outcome does not hold.
		Let $n\ge0$ be maximal such that there is a blockade $(B_0,B_1,\ldots,B_n)$ in $F$ with $\abs{B_n}\ge(1-3y^{1/2})^n\abs F$ and $\abs{B_{i-1}}\ge y^{2ba^2}\abs F$ for all $i\in[n]$.
		Since the second outcome does not hold, $n<y^{-1/2}$; and so, since $y\le2^{-12}$,
		\[\abs{B_n}\ge(1-3y^{1/2})^n\abs F\ge 4^{-3y^{1/2}n}\abs F
		\ge 4^{-3}\abs F\ge y^{1/2}\abs F.\]
		Hence $F[B_n]$ has maximum degree at most $y\abs F\le y^{1/2}\abs{B_n}$; and so \cref{lem:vc1} (with $y^{1/2}$ in place of $y$, note that $y^{1/2}\le 2^{-6\abs H}<1/\abs H$) implies that either:
		\begin{itemize}
			\item there exists $S\subseteq B_n$ with $\abs S\ge y^{3ba^2/2}\abs{B_n}$ such that $F[S]$ is $y^a$-restricted; or
			
			\item there is an anticomplete pair $(A,B)$ in $F[B_n]$ with $\abs A\ge y^{3ba^2/2}\abs {B_n}$ and $\abs B\ge (1-3y^{1/2})\abs{B_n}$.
		\end{itemize}
		If the second bullet holds, then $(B_0,B_1,\ldots,B_{n-1},A,B)$ would be a blockade contradicting the maximality of $n$ since $\abs A\ge y^{3ba^2/2}\abs{B_n}\ge y^{3ba^2/2+1/2}\abs F\ge y^{2ba^2}\abs F$.
		Thus the first bullet holds; and so $S\subseteq V(F)$ is $y^{a}$-restricted in $F$ and satisfies $\abs S\ge y^{3ba^2/2}\abs{B_n}\ge y^{3ba^2/2+1/2}\abs G\ge y^{2ba^2}\abs F$.
		Hence the first outcome of the lemma holds.
		This proves \cref{lem:vc2}.
	\end{proof}

 We have been assuming that our restricted graphs are sparse.  We now move to the general case, by handling dense and sparse graphs simultaneously.
 
 For a bigraph $H$, its {\em bicomplement} is the bigraph $\doverline H$ with the same bipartition and edge set 
	$\{uv:u\in V_1(H),v\in V_2(H),uv\nin E(H)\}$.
 
	\begin{lemma}
		\label{lem:vc3}
		Let $H$ be a bigraph, and let $v\in V(H)$.
		Let $b$ be given by \cref{thm:cond}.
		Assume there exists $a\ge2$ such that every $n$-vertex graph with no bi-induced copy of $H\setminus v$ contains a clique or stable set of size at least $n^{1/a}$.
		Let $0<y\le 2^{-12\abs H}$, and let $F$ be a $y$-restricted graph with no bi-induced copy of $H$.
		Then either:
		\begin{itemize}
			\item $F$ has a $y^{a}$-restricted induced subgraph with at least $y^{2ba^2}\abs F$ vertices; or
			
			\item there is a complete or anticomplete $(y^{-1/2},y^{2ba^2}\abs F)$-blockade in $F$.
		\end{itemize}
	\end{lemma}
	\begin{proof}
            Since $F$ is $y$-restricted, either $F$ or $\overline F$ is $y$-sparse.  If $F$ is $y$-sparse then the result follows from \cref{lem:vc2} (using the assumption on $H\setminus v$).  If $\overline F$ is $y$-sparse then the result follows from applying \cref{lem:vc2} to the complement of $F$ (noting that $\overline F$ is $\doverline H$-free, and using the assumption on $\doverline H\setminus v$).
	\end{proof}

With \cref{lem:vc3} in hand, we can now prove that the conditions of \cref{lem:eh} are satisfied.

	\begin{lemma}
		\label{lem:polyrodl}
		For every bigraph $H$, there exists $d\ge2$ such that for every $x\in(0,2^{-d})$ and every graph $G$ with no bi-induced copy of $H$,
		either:
		\begin{itemize}
			\item $G$ has an $x$-restricted induced subgraph with at least $x^d\abs G$ vertices; or
			
			\item there is a complete or anticomplete $(k,\abs G/k^d)$-blockade in $G$, for some $k\in[2,1/x]$.
		\end{itemize}
	\end{lemma}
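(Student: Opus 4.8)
The plan is to prove \cref{lem:polyrodl} by induction on $\abs H$. The engine is iterative sparsification (\cref{lem:sparsify}): R\"odl's theorem (\cref{thm:rodl}) will provide the starting $c$-restricted subgraph, \cref{lem:vc2} will drive each sparsification step, and whenever a step of the sparsification fails we will instead read off a large complete or anticomplete blockade. The crucial observation is that \cref{lem:polyrodl} for a bigraph $H'$ is exactly the hypothesis of \cref{lem:eh} for the class of graphs with no bi-induced copy of $H'$; so once \cref{lem:polyrodl} is known for all bigraphs with fewer than $\abs H$ vertices, \cref{lem:eh} hands us the Erd\H os--Hajnal exponents that \cref{lem:vc2} requires as input.

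Concretely: if $\abs H\le1$ the only graph with no bi-induced copy of $H$ is the null graph, so the statement is vacuous. Assume $\abs H\ge2$ and fix $v\in V(H)$. Both $H\setminus v$ and $\doverline H\setminus v$ have $\abs H-1$ vertices, so by the induction hypothesis \cref{lem:polyrodl} holds for each of them; feeding these into \cref{lem:eh} (applied to the hereditary classes of graphs with no bi-induced copy of $H\setminus v$, respectively $\doverline H\setminus v$) produces a single $a\ge2$ such that every $n$-vertex graph with no bi-induced copy of $H\setminus v$, and every $n$-vertex graph with no bi-induced copy of $\doverline H\setminus v$, has a clique or stable set of size at least $n^{1/a}$. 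We keep both versions because \cref{lem:vc2} handles only sparse graphs directly, and the dense case will be reduced to the sparse case by passing to the complement, which converts ``no bi-induced copy of $H$'' into ``no bi-induced copy of $\doverline H$''. Now let $b\ge1$ be large enough that \cref{lem:vc2} is available with this common $b$ for both $H$ and $\doverline H$, set $c:=2^{-12\abs H}$, apply \cref{thm:rodl} with $\eps=c$ to the (proper, hereditary, bounded-VC by \cref{lem:vcbip}) class of graphs with no bi-induced copy of $H$ to obtain $\delta>0$, pick $t\ge1$ with $c^t\le\delta$ and $t\ge2ba$, and finally choose $d$ to be a sufficiently large constant depending only on $\abs H,a,b,t$ — large enough that $d\ge12\abs H$ (so $(0,2^{-d})\subseteq(0,c)$), $d\ge2at$, and $d$ dominates an explicit linear function of $ba^2+t$ (this last requirement makes the fallback blockade wide enough, and is affordable since $c$ is a fixed small number).

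The heart of the argument is then the following, for a fixed $x\in(0,2^{-d})$ and a graph $G$ with no bi-induced copy of $H$. Ask whether the second hypothesis of \cref{lem:sparsify} (with parameters $c,a,t$) holds for $G$ and $x$; its first hypothesis holds by the choice of $t$ together with \cref{thm:rodl}. If the second hypothesis holds as well, \cref{lem:sparsify} yields an $x$-restricted induced subgraph of $G$ on at least $x^{2at}\abs G\ge x^d\abs G$ vertices, which is the first conclusion of \cref{lem:polyrodl}. Otherwise there are $y\in[x,c]$ and a $y$-restricted induced subgraph $F$ of $G$ with $\abs F\ge y^{2t}\abs G$ that has no $y^a$-restricted induced subgraph on at least $y^{at}\abs F$ vertices. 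If $F$ is $y$-sparse, apply \cref{lem:vc2} to $F$ with the pair $(H,v)$: its first outcome would produce a $y^a$-restricted induced subgraph of $F$ on at least $y^{2ba^2}\abs F\ge y^{at}\abs F$ vertices (using $t\ge2ba$), which is ruled out, so its second outcome holds and $F$ — hence $G$ — has an anticomplete $(y^{-1/2},y^{2ba^2}\abs F)$-blockade. If instead $F$ is $y$-dense, then $\overline F$ is $y$-sparse with no bi-induced copy of $\doverline H$; applying \cref{lem:vc2} to $\overline F$ with $(\doverline H,v)$ and running the same argument (being $y^a$-restricted is complement-invariant, so the forbidden outcome is still forbidden) gives an anticomplete $(y^{-1/2},y^{2ba^2}\abs F)$-blockade in $\overline F$, i.e.\ a complete one in $G$. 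In both cases set $k:=\floor{y^{-1/2}}$; then $2\le k\le 1/x$ (since $y^{-1/2}\ge c^{-1/2}\ge2$ and $y\ge x$), and the blockade has width at least $y^{2ba^2}\abs F\ge y^{2ba^2+2t}\abs G\ge\abs G/k^d$ by the choice of $d$, so $G$ has a complete or anticomplete $(k,\abs G/k^d)$-blockade with $k\in[2,1/x]$, the second conclusion. This closes the induction.

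I expect the genuinely hard work to be already encapsulated in \cref{lem:vc2} — constructing the anticomplete blockade inside a sparse $H$-free graph via the ultra-strong regularity lemma and the inner induction on $\abs H$ — so that what remains here is essentially bookkeeping. The two places needing care are (i) noticing that the dense/complement reduction forces us to carry $\doverline H$ through the induction alongside $H$ (and hence to ask \cref{lem:eh} for both classes at once), and (ii) choosing the single exponent $d$ so that every polynomial loss is absorbed simultaneously: reconciling the $y^{at}$ demanded by \cref{lem:sparsify} with the $y^{2ba^2}$ delivered by \cref{lem:vc2}, checking $\floor{y^{-1/2}}\in[2,1/x]$, and verifying that a blockade of width $y^{2ba^2+2t}\abs G$ meets the threshold $\abs G/k^d$. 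Step (ii), the constant-chasing, is the main (though routine) obstacle.
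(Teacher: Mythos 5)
Your proof is correct and takes essentially the same route as the paper: induction on $\abs H$, using \cref{lem:eh} together with the induction hypothesis to supply the exponent $a$ needed by \cref{lem:vc2}, R\"odl's theorem to seed the iteration in \cref{lem:sparsify}, and extraction of a complete or anticomplete blockade when a sparsification step fails (handling the dense case by passing to $\overline F$ and $\doverline H$). The only differences are cosmetic bookkeeping: the paper derives the exponent for $\doverline H\setminus v$ by complementation rather than by a second application of \cref{lem:eh}; it observes that $k$ in the second outcome need not be an integer and so takes $k=y^{-1/2}$ directly (whereas you take $k=\floor{y^{-1/2}}$ and absorb the $2^d$ loss by enlarging $d$); and it fixes $t\ge ba^2$ and $d=2\max(at,\abs H)$ with $a\ge4$, while you state the weaker constraints $t\ge 2ba$ and \dd$d$ sufficiently large\ee{} — both choices work.
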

	\begin{proof} 
		We argue by induction on $\abs H$. We may assume that $|H|\ge 2$. Choose $v\in V(H)$.
		By \cref{lem:eh} and the induction hypothesis applied to $H\setminus v$ and $\doverline H\setminus v$,
		there exists $a\ge4$ such that 
		every $n$-vertex graph with no bi-induced copy of $H\setminus v$, and every $n$-vertex graph with no bi-induced copy of 
		$\doverline H\setminus v$, contains a clique or stable set of size at least $n^{1/a}$.  
		Let $c:=2^{-12\abs H}$, and let $b\ge2$ be given by \cref{thm:cond}.  By R\"odl's Theorem~\ref{thm:rodl}, we can choose some
		$t\ge ba^2$ such that every graph $G$ with no bi-induced copy of $H$ contains a $c$-restricted induced subgraph with at least $c^t\abs G$ vertices.
		We claim that $d:=2\max(at,\abs H)\ge 8t$ satisfies the lemma.

  To show this, let $x\in(0,2^{-d})\subseteq(0,c)$, and suppose that $G$ has no bi-induced copy of $H$.
		Suppose that the second outcome of the lemma does not hold; that is, there is no $k\in[2,1/x]$ such that there is a complete or anticomplete $(k,\abs G/k^d)$-blockade in $G$.
		\begin{claimu}
			For every $y\in[x,c]$ and every $y$-restricted induced subgraph $F$ or $G$ with $\abs F\ge y^{2t}\abs G$, there is a $y^a$-restricted induced subgraph of $F$ with at least $y^{at}\abs F$ vertices.
		\end{claimu}
		\begin{subproof}
			By the choice of $t$ and \cref{lem:vc3}, either:
			\begin{itemize}
				\item $F$ has a $y^a$-restricted induced subgraph with at least $y^{2ba^2}\abs F\ge y^{2t}\abs F\ge y^{at}\abs F$ vertices; or
				
				\item there is a complete or anticomplete $(y^{-1/2},y^{2ba^2}\abs F)$-blockade in $F$.
			\end{itemize}
			
			If the second bullet holds, then since $y^{2ba^2}\abs F\ge y^{4t}\abs G\ge y^{d/2}\abs G$ by the choice of $d$,
			there would be a complete or anticomplete $(y^{-1/2},y^{d/2}\abs G)$ blockade in $G$, which contradicts that the second outcome of the lemma does not hold (note that $y^{1/2}\le c^{1/2}\le\frac12$).
			Thus the first bullet holds, proving the claim.
		\end{subproof}
		
		\cref{lem:sparsify} and the claim imply that $G$ has an $x$-restricted induced subgraph with at least $x^{2at}\abs G\ge x^d\abs G$ vertices,
		which is the first outcome of the lemma.
		This proves \cref{lem:polyrodl}.
	\end{proof}
	\cref{lem:eh,lem:polyrodl} show the \erh{} property for every class of graphs with no bi-induced 
	copy of a given bipartite graph. But much of the argument so far was about density, and that allows us to go a step further,  and apply \cref{thm:cond} once more to prove \cref{thm:main}.
	\begin{proof}
		[Proof of \cref{thm:main}]
		It follows from \cref{lem:eh,lem:polyrodl} that there exists $a\ge2$ such that every $n$-vertex graph with no bi-induced copy of $H$ has a clique or stable set of size at least $n^{1/a}$;
		and by increasing $a$ if necessary we may assume $2^a>\abs H$.
		Let $b\ge2$ be given by \cref{thm:cond}.
		We claim that $C:=2ab$ satisfies the theorem.
		To see this, let $\eps\in(0,\frac12)$ and $G$ be a graph with no bi-induced copy of $H$.
		It suffices to show that $G$ has an $\eps$-restricted induced subgraph with at least $\eps^{2ab}\abs G$ vertices.
  
		We may assume that $\abs G>\eps^{-2ab}$.
		By \cref{thm:cond} with $\eps^{2a}$ in place of $\eps$, there is an $(\eps^{-2a},\eps^{2ab}\abs G)$-blockade $(B_1,\ldots,B_{\ell})$ in $G$ where $\ell\ge\eps^{-2a}$, such that
		$\abs{B_1}=\cdots=\abs{B_{\ell}}=:m$; and for all distinct $i,j\in[\ell]$, $B_i,B_j$ are $\eps^{2a}$-sparse to each other in $G$ or $\overline G$.
		Let $J$ be the graph with vertex set $[\ell]$ where $ij\nin E(J)$ if and only if $B_i,B_j$ are $\eps^{2a}$-sparse to each other in $G$.
		\begin{claimu}
			$J$ has no bi-induced copy of $H$.
		\end{claimu}
		\begin{subproof}
			Suppose not; and we may assume $V(H)\subseteq V(J)$.
			For every $i\in V(H)$, let $v_i$ be a uniformly random vertex in $B_i$, chosen independently.
			Then for $(X,Y)$ the bipartition of $H$, the probability that $\{v_1,\ldots,v_{\abs H}\}$ forms a bi-induced copy of $H$ in $G$ is at least $1-\abs X\abs Y\eps^{2a}>1-\abs H^22^{-2a}>0$ by the choice of $a$;
			and so there is a bi-induced copy of $H$ in $G$, a contradiction.
			This proves the claim.
		\end{subproof}
		Now, by the claim and the choice of $a$, $J$ has a clique or stable set $I$ with $\abs I\ge\abs J^{1/a}\ge \eps^{-2}$. 
		Let $S:=\bigcup_{i\in I}B_i$; then $\abs S=\abs Im\ge m\ge \eps^{2ab}\abs G$.
		If $I$ is stable in $J$, then $G[S]$ has maximum degree at most 
		\[m+\abs I\eps^am
		=(1/\abs I+\eps^a)\abs Im
		\le 2\eps^2\abs S\le \eps\abs S. \]
		Similarly, if $I$ is a clique in $J$, then $\overline G[S]$ has maximum degree at most $\eps\abs S$.
		This proves \cref{thm:main}.
	\end{proof}
	\section{Ramsey numbers for hypergraphs with bounded VC-dimension}
	\label{subsec:hyper}
	
	A {\em  $k$-uniform hypergraph} is a hypergraph whose edges all have cardinality $k$.   The notion of VC-dimension extends naturally to $k$-uniform hypergraphs $\mac H$.
For a set $S$, let $\binom Sk$ denote the family of subsets of size $k$ of $S$; 
 and for $S\in\binom{V(\mac H)}{k-1}$, let $N_{\mac H}(S):=\{v\in V(\mac H):S\cup\{v\}\in E(\mac H)\}$.
	The {\em VC-dimension} of $\mac H$ is the VC-dimension of the family $\{N_{\mac H}(S):S\in\binom{V(\mac H)}{k-1}\}$ with ground set $V(\mac H)$.

	A {\em clique}  in a $k$-uniform hypergraph $\mac H$
	 is a subset of $V(\mac H)$ whose subsets of size $k$ are 
	all in $E(\mac H)$, and a {\em stable set} is a clique in the complementary $k$-uniform hypergraph.  
The {\em Ramsey number} $R_k(t)$ is the smallest integer $n\ge1$ such that every $n$-vertex $k$-graph contains a clique or stable set of size at least~$n$.  By Ramsey's theorem, $R_k(n)$ exists for all values of $n$ and $k$; equivalently, writing $f_k(n)$ for the largest $m$ such  that every $k$-uniform hypergraph with $n$ vertices has a clique or stable set of size at least~$m$, we have $f_k(n)\to\infty$ for any fixed $k$.  The growth rate of $f_k$ has received considerable attention: for $k=2$, it is well known that $f_k(n)=\Theta(\log n)$, but for $k\ge3$ the bounds are much farther apart.  For $k=3$, Erd\H os, Hajnal and Rado \cite{EHR} showed that
$$c_1\log\log n\le f_3(n)\le c_2\sqrt{\log n};$$
and for $k\ge 4$, the best bounds are
$$c_1\log^{(k-1)}(n)\le f_k(n)\le c_2\sqrt{\log^{(k-2)}n},$$
where $\log^{(k)}n$ denotes the $k$-times iterated logarithm. 

What can be said about hypergraphs with bounded VC-dimension?  Let $f_k^d(n)$ be the largest integer $m$ such that every $k$-uniform hypergraph with $n$ vertices and VC-dimension at most $d$ has a clique or stable set of size at least~$m$.  For $k\ge 3$, the best previous bounds are
$$e^{(\log^{(k-1)}n)^{1-o(1)}}\le f_k^d(n)\le c_2\log^{(k-2)}(n),$$
where the 
upper follows from a construction of Conlon, Fox, Pach, Sudakov, and Suk~\cite{MR3217709} and the
lower bound is due to Fox, Pach, and Suk~\cite{MR3943496}, who observed that it follows from their $2^{(\log n)^{1-o(1)}}$ bound for graphs of bounded VC-dimension and by an adaptation of the classical Erd\H os--Rado greedy argument~\cite{MR0065615}.

Stronger lower bounds have been obtained in more restrictive settings.  Conlon, Fox, Pach, Sudakov, and Suk~\cite{MR3217709} proved that every {semi-algebraic} $n$-vertex $k$-uniform hypergraph of bounded description complexity admits a clique or stable set of size at least $\exp(c\log^{(k-1)}n)$, where the constant $c>0$ depends on $k$ and the description complexity of the hypergraph. This was later extended to $k$-uniform hypergraphs definable by {distal structures} by Chernikov, Starchenko, and Thomas~\cite{MR4205776}.  Combining the Erd\H os–Rado argument with our \cref{thm:vc} allows us to extend this further to hypergraphs with bounded VC-dimension, improving the lower bound to show that, for fixed $k\ge 3$ and $d\ge 1$,
$$f_k^d(n)=e^{\Theta(\log^{(k-1)}n)}.$$

We give the details for completeness.  For integers $d\ge1$, $k\ge2$, and $n\ge1$, let $R_k^d(n)$ be the smallest integer $m\ge1$ such that every $m$-vertex $k$-graph of VC-dimension at most $d$ contains a clique or stable set of size at least~$n$;
	then \cref{thm:vc} says that $R_2^d(n)\le n^C$ for all $n\ge1$, for some $C>0$ depending on $d$ only.
	\begin{theorem}
		\label{thm:hyper}
		For all integers $d\ge1$, $k\ge3$, and $n\ge2$,
		\[R_k^d(n)\le 2^{\binom{R_{k-1}^d(n-1)}{k-1}}+k-2.\]
	\end{theorem}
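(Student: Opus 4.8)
The plan is to adapt the classical Erd\H os-Rado greedy argument, the one new ingredient being a choice of the derived $(k-1)$-uniform hypergraph that keeps the VC-dimension bounded. Set $m:=R_{k-1}^d(n-1)+1$, so that $N:=2^{\binom{m-1}{k-1}}+k-2=2^{\binom{R_{k-1}^d(n-1)}{k-1}}+k-2$. Since the property ``every $p$-vertex $k$-uniform hypergraph of VC-dimension at most $d$ has a clique or stable set of size $n$'' is monotone in $p$ (delete a vertex), it suffices to establish it for $p=N$; so let $\mac H$ be a $k$-uniform hypergraph with $N$ vertices and VC-dimension at most $d$.

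First I would run the Erd\H os-Rado extraction. Build vertices $x_1,\dots,x_m$ and a nested chain $V(\mac H)=Y_0\supseteq Y_1\supseteq\cdots\supseteq Y_{m-1}$ with $x_i\in Y_{i-1}$ and $Y_i\subseteq Y_{i-1}\setminus\{x_i\}$, chosen so that all $y\in Y_i$ give the same status (edge or non-edge of $\mac H$) to the $k$-set $T\cup\{x_i,y\}$ for every $T\in\binom{\{x_1,\dots,x_{i-1}\}}{k-2}$. A pigeonhole at step $i$ over the $2^{\binom{i-1}{k-2}}$ possible status-vectors gives $\abs{Y_i}\ge\lceil(\abs{Y_{i-1}}-1)/2^{\binom{i-1}{k-2}}\rceil$, and a short induction using the identity $\sum_{j=k-1}^{m-1}\binom{j-1}{k-2}=\binom{m-1}{k-1}$ then shows $\abs{Y_{m-1}}\ge 1$, so the construction survives all $m$ steps---this is exactly where the exponent $\binom{m-1}{k-1}$ and the additive term $k-2$ are spent. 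The construction delivers the homogeneity property: for all $i_1<\cdots<i_k$, whether $\{x_{i_1},\dots,x_{i_k}\}\in E(\mac H)$ depends only on $(i_1,\dots,i_{k-1})$, and in particular equals whether $\{x_{i_1},\dots,x_{i_{k-1}},x_m\}\in E(\mac H)$ (here $i_{k-1}\le m-1$ always).

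Next I would form the derived hypergraph $\mac H'$ on the $(m-1)$-element set $W:=\{x_1,\dots,x_{m-1}\}$, declaring a $(k-1)$-subset $\{x_{i_1},\dots,x_{i_{k-1}}\}$ of $W$ to be an edge precisely when $\{x_{i_1},\dots,x_{i_{k-1}},x_m\}\in E(\mac H)$. By homogeneity this coincides with the ``usual'' Erd\H os-Rado derived hypergraph (edge iff the common status with some, equivalently every, later vertex is ``edge''), but anchoring the definition at the single fixed vertex $x_m$ is what lets VC-dimension transfer: for every $S\in\binom{W}{k-2}$ one has $N_{\mac H'}(S)=N_{\mac H}(S\cup\{x_m\})\cap W$, and consequently every subset of $W$ shattered by $\{N_{\mac H'}(S):S\in\binom{W}{k-2}\}$ is also shattered by $\{N_{\mac H}(T):T\in\binom{V(\mac H)}{k-1}\}$. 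Hence $\mac H'$ is a $(k-1)$-uniform hypergraph with $m-1=R_{k-1}^d(n-1)$ vertices and VC-dimension at most $d$, so by the definition of $R_{k-1}^d(n-1)$ it contains a clique or stable set $\{x_{i_1},\dots,x_{i_{n-1}}\}$ of size $n-1$.

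Finally I would add $x_m$ back. Using homogeneity, a $k$-subset of $\{x_{i_1},\dots,x_{i_{n-1}},x_m\}$ that avoids $x_m$ has the same status as the $k$-set obtained by replacing its largest vertex with $x_m$, while a $k$-subset containing $x_m$ already has that form; in either case its status equals the $\mac H'$-status of the corresponding $(k-1)$-subset of $\{x_{i_1},\dots,x_{i_{n-1}}\}$, hence ``edge'' throughout if $\{x_{i_1},\dots,x_{i_{n-1}}\}$ is a clique of $\mac H'$ and ``non-edge'' throughout if it is a stable set. So $\{x_{i_1},\dots,x_{i_{n-1}},x_m\}$ is a clique or stable set of size $n$ in $\mac H$, which gives $R_k^d(n)\le N$. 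The only parts requiring care are the pigeonhole and hockey-stick bookkeeping that keep the extraction alive; the one genuinely new point over a verbatim Erd\H os-Rado argument---and the step I would worry about in advance---is ensuring the derived hypergraph still has VC-dimension at most $d$, which is precisely what the fixed-anchor definition of $\mac H'$ accomplishes.
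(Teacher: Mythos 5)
Your proposal is correct and follows essentially the same route as the paper: both run the classical Erd\H os--Rado extraction (the paper phrases it via the nested pairs $(A_q,B_q)$, you via a vertex sequence $x_1,\dots,x_m$ with a shrinking chain $Y_i$, but these are the same construction and the same bookkeeping), and both hinge on the identical key observation that defining the derived $(k-1)$-uniform hypergraph via a single fixed anchor vertex ($v\in B_p$ in the paper, $x_m$ in yours) makes its neighbourhood families restrictions of those of $\mac H$, so VC-dimension is preserved.
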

	\begin{proof}
		Let $p:=R_{k-1}^d(n-1)$.
		Let $\mac H$ be a $k$-uniform hypergraph with $2^{p\choose k-1}+k-2$ vertices and VC-dimension at most $d$.
		We claim that:
		\begin{claim}
			\label{claim:hyper}
			For every integer $q$ with $k-2\le q\le p$, there are disjoint $A_q,B_q\subseteq V(\mac H)$ with $\abs{A_q}=q$ and $\abs{B_q}\ge 2^{{p\choose k-1}-{q\choose k-1}}$ such that for every $S\subseteq{A_q\choose k-1}$, either $S\cup\{v\}\in E(\mac H)$ for all $v\in B_q$ or $S\cup\{v\}\nin E(\mac H)$ for all $v\in B_q$.
		\end{claim}
		\begin{subproof}
			The claim is vacuously true for $q=k-2$.
			For $k-2\le q<p$, we shall prove the claim of $q+1$ assuming that it is true for $q$.
			Let $u\in B_q$ be arbitrary, and let $A_{q+1}:=A_q\cup\{u\}$.
			For every $T\in{A_q\choose k-2}$ and $v\in B_q\setminus\{u\}$,
			let $f_T(v)$ be $0$ if $T\cup\{u,v\}\in E(\mac H)$ and $1$ if $T\cup\{u,v\}\nin E(\mac H)$.
			Then by the pigeonhole principle,
			there exists $B_{q+1}\subseteq B_q$ with
			\begin{align*}
				\abs{B_{q+1}}\ge\ceil*{2^{-{\abs{A_q}\choose k-2}}(\abs{B_q}-1)}
				&\ge \ceil*{2^{-{q\choose k-2}}\left(2^{{p\choose k-1}-{q\choose k-1}}-1\right)}\\
				&=\ceil*{2^{{p\choose k-1}-{q+1\choose k-1}}-2^{-{q\choose k-2}}}
				=2^{{p\choose k-1}-{q+1\choose k-1}},
			\end{align*}
			such that $f_T(v)=f_T(w)$ for all $T\in{A_q\choose k-2}$ and $v,w\in B_{q+1}$;
			here the last equation holds since ${q+1\choose k-1}\le{p\choose k-1}$ and ${q\choose k-2}\ge1$.
			Thus, by the choice of $A_q,B_q$, it follows that for every $S\subseteq{A_{q+1}\choose k-1}$, either $S\cup\{v\}\in E(\mac H)$ for all $v\in B_{q+1}$ or $S\cup\{v\}\nin E(\mac H)$ for all $v\in B_{q+1}$.
			This proves \cref{claim:hyper}.
		\end{subproof}
		Now, \cref{claim:hyper} with $q=p$ gives disjoint $A_p,B_p\subseteq V(\mac H)$ with $\abs{A_p}=p$ and $\abs{B_p}\ge1$ such that for every $S\subseteq{A_p\choose k-1}$, either $S\cup\{v\}\in E(\mac H)$ for all $v\in B_p$ or $S\cup\{v\}\nin E(\mac H)$ for all $v\in B_p$.
		Let $v\in B_p$; and let $\mac H'$ be the $(k-1)$-uniform hypergraph with vertex set $A_p$ where for every $S\in{A_p\choose k-1}$, $S\in E(\mac H')$ if and only if $S\cup\{v\}\in E(\mac H)$.
		\begin{claim}
			\label{claim:hypervc}
			$\mac H'$ has VC-dimension at most $d$.
		\end{claim}
		\begin{subproof}
			Suppose not; then there exists $A\subseteq A_p$ with $\abs A>d$ which is shattered by the family $\{N_{\mac H'}(T):T\in{A_p\choose k-2}\}$ with ground set $A_p$.
			However, by the definition of $\mac H'$, $A$ is then shattered by the family $\{N_{\mac H}(T\cup\{v\}):T\in{A_p\choose k-2}\}\subseteq\{N_{\mac H}(S):S\in{V(\mac H)\choose k-1}\}$ with ground set $V(\mac H)$, contrary to $\mac H$ having VC-dimension at most $d$.
			This proves \cref{claim:hypervc}. 
		\end{subproof}
		Now, by the definition of $p$, \cref{claim:hypervc} implies that $\mac H'$ has a clique or stable set $S$ with $\abs S\ge n-1$.
		Then $S\cup\{v\}$ is a clique or stable set in $\mac H$ of size at least $n$.
		This proves \cref{thm:hyper}.
	\end{proof}
	It is now not hard to iterate \cref{thm:hyper} and apply the bound $R_2^d(n)\le n^C$ to get $R_k^d(n)\le\operatorname{twr}_{k-1}(n^K)$ for some $K>0$ depending on $d,k$,
	where $\operatorname{twr}_k$ is defined recursively for $k\ge1$ by $\operatorname{twr}_1(t):=t$ and $\operatorname{twr}_{k}(t):=2^{\operatorname{twr}_{k-1}(t)}$ for all $t>0$.
	In other words, $f_k^d(n)\ge (\log^{(k-2)} n)^{1/K}$.
	\section{The viral property}
	Let us interpret \cref{thm:main} in the language of forbidden induced subgraphs.
	A {\em split} graph is a graph whose vertex set can be partitioned into a clique and a stable set.
	For graphs $H,G$, a {\em copy} of $H$ in $G$ is an injective map $\varphi\colon V(H)\to V(G)$ such that for all distinct $u,v\in V(H)$, $uv\in E(H)$ if and only if $\varphi(u)\varphi(v)\in E(G)$.
	For a finite family $\mac F$ of graphs, a graph $G$ is {\em $\mac F$-free} if there is no copy of $H$ in $G$ for all $H\in\mac F$.
	The following is a well-known strengthening of \cref{lem:vcbip} (see~\cite[Theorem 3.3]{MR3416129}).
	\begin{lemma}
		\label{lem:vcind}
		For every two bipartite graphs $H_1,H_2$ and every split graph $J$, there exists $d\ge1$ such that every $\{H_1,\overline{H_2},J\}$-free graph has VC-dimension at most $d$.
		Conversely, for every $d\ge1$, there are bipartite graphs $H_1,H_2$ and a split graph $J$ such that every graph of VC-dimension at most $d$ is $\{H_1,\overline{H_2},J\}$-free.
	\end{lemma}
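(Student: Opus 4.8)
The plan is to prove the two halves separately, the first being a short explicit construction and the second a Ramsey-type argument combined with the classical \erh{} theorem.

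\textbf{The easy (``conversely'') direction.} For $k\ge1$ let $\mathbf I_k$ be the bipartite graph with parts $\{p_1,\dots,p_k\}$ and $\{q_A:A\subseteq[k]\}$, both parts stable, and $p_iq_A$ an edge exactly when $i\in A$; let $\mathbf J_k$ be the split graph on the same vertices and same bipartite adjacency, but with $\{q_A:A\subseteq[k]\}$ turned into a clique. Given $d\ge1$ I would take $H_1:=H_2:=\mathbf I_{d+1}$ (bipartite) and $J:=\mathbf J_{d+1}$ (split). If $G$ has an induced copy of any of $\mathbf I_{d+1}$, $\overline{\mathbf I_{d+1}}$, $\mathbf J_{d+1}$, then, writing $p_i',q_A'$ for the images, the adjacency of $p_i'$ to $q_A'$ equals $[i\in A]$ in the first and third cases and $[i\notin A]$ in the second (the within‑side edges being irrelevant), so $N_G(q_A')\cap\{p_1',\dots,p_{d+1}'\}$ runs over all $2^{d+1}$ subsets as $A$ varies; hence $\{p_1',\dots,p_{d+1}'\}$ is shattered and $G$ has VC‑dimension $\ge d+1$. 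Contrapositively, VC‑dimension $\le d$ forces $G$ to be $\{H_1,\overline{H_2},J\}$‑free.

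\textbf{The hard direction: reduction to a ``rich'' incidence pattern.} Set $h:=\max(|V(H_1)|,|V(H_2)|,|V(J)|)$; it suffices to find $N$ such that VC‑dimension $\ge N$ forces an induced copy of $H_1$, $\overline{H_2}$, or $J$. The key observation is that if $G$ has disjoint vertex sets $P$ and $Q$ with $|P|=2h$, each of $P,Q$ a clique or a stable set, and such that for every $\sigma\subseteq P$ there are at least $h$ vertices $v\in Q$ with $N_G(v)\cap P=\sigma$, then the induced subgraph on $P\cup Q$ contains every bipartite graph on $\le h$ vertices when $P,Q$ are both stable, every co‑bipartite one when both are cliques, and every split one when exactly one is a clique. (The routine check: map the two sides of the target graph to $\le h$ vertices of $P$ and $\le h$ vertices of $Q$ of prescribed traces; since any trace on an $r\le h$-subset of $P$ extends to $2^{2h-r}\ge h$ traces on $P$, each realised $\ge h$ times in $Q$, there are enough distinct choices.) So producing such a ``rich pattern'' yields $H_1$, $\overline{H_2}$, or $J$.

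\textbf{The hard direction: building the pattern.} Fix a shattered set $S$ with $|S|=N$; after discarding $O(\log N)$ coordinates one may assume the chosen witnesses (one vertex $w_A$ with $N_G(w_A)\cap S=A$ for each $A\subseteq S$) lie outside $S$. Ramsey's theorem gives a homogeneous $S'\subseteq S$ with $N':=|S'|\ge\tfrac12\log_2 N$, and I would let $\Gamma:=G[\{w_A:A\subseteq S'\}]$, a graph on $2^{N'}$ vertices. Now a dichotomy: either $\Gamma$ has a clique or a stable set $\{w_A:A\in\mathcal F\}$ with $|\mathcal F|\ge (N')^{3h}$, or it does not. In the first case $(N')^{3h}>\binom{N'}{\le 2h+\lceil\log_2 h\rceil}$ for $N'$ large, so the Sauer--Shelah--Perles lemma gives $P^+\subseteq S'$ with $|P^+|=2h+\lceil\log_2 h\rceil$ shattered by $\mathcal F$; splitting $P^+=P\cup P_0$ with $|P|=2h$, each $\sigma\subseteq P$ is the $P$-trace of $\ge 2^{|P_0|}\ge h$ members of $\mathcal F$, so $P$ together with the corresponding witnesses is a rich pattern whose two sides inherit the homogeneity types of $S'$ and of $\mathcal F$ — giving $H_1$, $\overline{H_2}$, or $J$ as above. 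In the second case $\Gamma$ is a graph on $M=2^{N'}$ vertices with no clique or stable set of size $(N')^{3h}$; but by the \erh{} theorem an $H_1$-free graph on $M$ vertices has a clique or stable set of size $\ge 2^{c\sqrt{\log M}}$ with $c=c(H_1)>0$, and $2^{c\sqrt{N'}}>(N')^{3h}$ once $N'$ exceeds a constant depending only on $h$; hence $\Gamma$, and so $G$, contains an induced copy of $H_1$. Choosing $N$ large enough (only singly exponential in $h$ and $1/c(H_1)$ is needed) makes all the numerical inequalities hold, and one handles $S'$ a clique symmetrically, which only permutes which of $H_1,\overline{H_2},J$ is produced.

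\textbf{Where the difficulty lies.} The one genuinely non‑obvious point is that applying Ramsey's theorem directly to the $2^{N'}$ witnesses returns a homogeneous set of size only $\Theta(N')=\Theta(\log|\Gamma|)$ — far too small to run Sauer--Shelah and recover any incidence structure of positive dimension, and indeed a Ramsey‑extracted homogeneous family of witnesses need not shatter anything at all. The dichotomy above is exactly what sidesteps this: in the only case where no polynomially large homogeneous witness family exists, $\Gamma$ is forced to be so close to Ramsey‑extremal that it already contains $H_1$ (in fact every fixed graph) as an induced subgraph. This lemma is standard; one could alternatively simply cite the stated reference.
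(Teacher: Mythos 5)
The paper itself does not prove \cref{lem:vcind}; it simply cites~\cite[Theorem 3.3]{MR3416129}, so a direct comparison of proofs is not possible. That said, your argument is correct and self-contained, and it is worth recording where it differs from the ``standard'' route and where the one genuinely clever idea is.

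Your converse direction is the standard one: the shattering pattern $\mathbf I_{d+1}$ (and its co-bipartite and split variants) forces VC-dimension at least $d+1$, and this is fine. Your forward direction follows the usual first two moves (fix a shattered set $S$, pass to a Ramsey-homogeneous $S'\subseteq S$, and look at the $2^{|S'|}$ witnesses), and, as you correctly diagnose, the naive continuation fails: Ramsey inside the witness graph $\Gamma$ gives a homogeneous set of size only $\Theta(\log|\Gamma|)=\Theta(|S'|)$, which is far below what Sauer--Shelah--Perles needs to recover a shattered subset of size $\Theta(h)$. Your dichotomy — either $\Gamma$ has a homogeneous set of polynomial size $(N')^{3h}$, in which case Sauer--Shelah does work, or $\Gamma$ is so Ramsey-extremal that the Erd\H{o}s--Hajnal theorem (the classical $2^{c\sqrt{\log n}}$ bound, which is a theorem, not the conjecture) forces an induced copy of $H_1$ outright — is a valid and rather elegant way around this obstacle, and there is no circularity since that 1989 bound is independent of anything proved in this paper. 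A few numerical remarks are in order: the constant $c(H_1)$ in the Erd\H{o}s--Hajnal bound does depend on $H_1$, not merely on $h$, but since the original proof gives $c(H)=\Omega(1/|H|)$ your ``constant depending only on $h$'' is defensible once one plugs that in; the inequality $(N')^{3h}>\binom{N'}{\le 2h+\lceil\log_2 h\rceil-1}$ holds for all $N'$ large because $2h+\lceil\log_2 h\rceil-1<3h$; and the ``discard $O(\log N)$ coordinates to get witnesses outside $S$'' step is sound (one can even get away with discarding a single coordinate by applying Sauer--Shelah to the $\ge 2^N-N$ traces of vertices outside $S$). The one thing I would flag is that invoking the full Erd\H{o}s--Hajnal theorem is heavier machinery than this essentially Ramsey-theoretic lemma really needs, and the cited reference almost certainly has a more elementary argument; but your proof is correct, and the dichotomy you identified is precisely the non-obvious point.
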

	Thus, \cref{thm:vcpolyrodl} can be rewritten as:
	\begin{theorem}
		\label{thm:polyrodlvc}
		For every two bipartite graphs $H_1,H_2$ and every split graph $J$, the class of $\{H_1,\overline{H_2},J\}$-free graphs has the polynomial R\"odl property.
	\end{theorem}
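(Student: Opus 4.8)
The plan is to derive \cref{thm:polyrodlvc} immediately from \cref{thm:vcpolyrodl} together with the first half of \cref{lem:vcind}. Given two bipartite graphs $H_1,H_2$ and a split graph $J$, \cref{lem:vcind} provides an integer $d\ge1$ such that every $\{H_1,\overline{H_2},J\}$-free graph has VC-dimension at most $d$. Thus the class of $\{H_1,\overline{H_2},J\}$-free graphs is a subclass of the class of graphs of VC-dimension at most $d$.

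Next I would record that the polynomial R\"odl property passes to subclasses: its defining condition asserts that for some $C>0$, \emph{every} member $G$ of the class contains an $\eps$-restricted induced subgraph on at least $\eps^C\abs G$ vertices for every $\eps\in(0,\frac12)$, so if a class has this property with constant $C$, then so does any subclass with the same $C$. One should also note the routine fact that the class of $\{H_1,\overline{H_2},J\}$-free graphs is hereditary, being closed under induced subgraphs and isomorphism, so that it makes sense to speak of its having the polynomial R\"odl property. Since \cref{thm:vcpolyrodl} asserts that the class of graphs of VC-dimension at most $d$ has the polynomial R\"odl property, the conclusion follows.

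There is essentially no obstacle here: the entire content lies in \cref{thm:vcpolyrodl} (equivalently \cref{thm:main}) and in \cref{lem:vcind}, which translates the combinatorial hypothesis of excluding a bipartite graph, the complement of a bipartite graph, and a split graph into the hypothesis of bounded VC-dimension. The only point requiring any care is that \cref{lem:vcind} is applied to the fixed triple $(H_1,H_2,J)$ and yields a single $d$, which is exactly what is needed; no uniformity over families of forbidden graphs enters. For the same reason, the converse half of \cref{lem:vcind} — which would be used to see that \cref{thm:vcpolyrodl} and \cref{thm:polyrodlvc} are in fact equivalent — plays no role in this direction of the argument.
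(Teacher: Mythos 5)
Your proposal is correct and matches the paper's approach exactly: the paper obtains \cref{thm:polyrodlvc} by combining \cref{thm:vcpolyrodl} with the first half of \cref{lem:vcind}, precisely as you describe.
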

	We say that a finite family $\mac F$ of graphs is {\em viral} if there exists $C>0$ such that 
	for every $\eps\in(0,\frac12)$ and for every graph $G$ with at most $(\eps^C\abs G)^{\abs H}$ copies of $H$ for all $H\in\mac F$, there is an $\eps$-restricted induced subgraph of $G$ with at least $\eps^C\abs G$ vertices.
	Thus, if $\mac F$ is viral then the class of $\mac F$-free graphs has the polynomial R\"odl property.
	It is conjectured in~\cite{density3} that $\{H\}$ is viral for all graphs $H$, which would mean that Nikiforov's theorem~\cite{MR2271833} holds with polynomial dependence.
	Recently, Gishboliner and Shapira~\cite{MR4657286} showed that for every two bipartite graphs $H_1,H_2$ and every split graph $J$,
	$\{H_1,\overline{H_2},J\}$ is viral if and only if the class of $\{H_1,\overline{H_2},J\}$-free graphs has the \erh{} property, by using a strengthening of \cref{thm:partition} for graphs with few copies of $H_1,\overline{H_2},J$.
	Later, Buci\'c, Fox, and Pham~\cite{bfp} showed that for any finite family $\mac F$, the class of $\mac F$-free graphs has the \erh{} property if and only if $\mac F$ is viral.
	Thus one can bootstrap \cref{thm:polyrodlvc} into the~following.
	\begin{theorem}
		\label{thm:vcviral}
		For every two bipartite graphs $H_1,H_2$ and every split graph $J$,  $\{H_1,\overline{H_2},J\}$ is viral.
	\end{theorem}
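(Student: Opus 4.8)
The plan is not to prove virality from scratch but to deduce it by combining \cref{thm:vc} with the dichotomy of Gishboliner and Shapira~\cite{MR4657286}. First I would invoke \cref{lem:vcind}: there is $d\ge1$ such that every $\{H_1,\overline{H_2},J\}$-free graph has VC-dimension at most $d$, so by \cref{thm:vc} (equivalently by \cref{thm:polyrodlvc}) the class of $\{H_1,\overline{H_2},J\}$-free graphs has the \erh{} property. Gishboliner and Shapira proved that, for bipartite $H_1,H_2$ and a split graph $J$, the family $\{H_1,\overline{H_2},J\}$ is viral \emph{if and only if} the class of $\{H_1,\overline{H_2},J\}$-free graphs has the \erh{} property; feeding the conclusion of \cref{thm:vc} into this equivalence gives the theorem at once. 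The point of this bootstrap is that it upgrades a statement about all $\{H_1,\overline{H_2},J\}$-free graphs to a statement about all graphs having few copies of each of $H_1,\overline{H_2},J$.

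For completeness one could instead give a direct argument by rerunning \cref{sec:reg,sec:sparsify} with the hypothesis ``no bi-induced copy of $H$'' replaced everywhere by ``few copies of each of $H_1,\overline{H_2},J$''. The one genuinely new ingredient is a counting strengthening of \cref{thm:partition}: there are constants $K\ge2$ and $\beta>0$, depending only on $H_1,H_2,J$, such that for every $\eps\in(0,\frac12)$ and every $n$-vertex graph $G$ with at most $(\beta\eps^K n)^{|H_i|}$ copies of $H_i$ (for $i=1,2$) and at most $(\beta\eps^K n)^{|J|}$ copies of $J$, there exist $L\in[\eps^{-1},\eps^{-K}]$ and an equipartition $V(G)=V_1\cup\cdots\cup V_L$ with all but an $\eps$-fraction of the pairs $(V_i,V_j)$ weakly $\eps$-pure. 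Granting this, \cref{thm:cond} carries over with ``few copies'' in its hypothesis, and then \cref{lem:vc1,lem:vc2,lem:polyrodl} and the proof of \cref{thm:main} transfer: every place where we formerly derived a contradiction from a single bi-induced copy of $H$, the relevant averaging or random-choice step in fact produces $\Omega\bigl((\operatorname{poly}(\eps)\,n)^{|H|}\bigr)$ such copies, hence that many copies of the corresponding member of $\{H_1,\overline{H_2},J\}$, contradicting the hypothesis. The required care is bookkeeping: passing to an induced subgraph on $\rho n$ vertices inflates the admissible copy-count by a factor $\rho^{-|H|}$, taking complements swaps the roles of $H_1$ and $\overline{H_2}$ and replaces $J$ by another split graph, and each turn of the sparsification loop of \cref{lem:sparsify} costs only a polynomial factor in $\eps$, so a single $C=C(H_1,H_2,J)$ suffices.

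The main obstacle is exactly this counting regularity lemma and its bookkeeping. Proving it requires relating a scarcity of induced copies of the split graph $J$ to the regularity structure — the machinery that underlies \cref{lem:vcind} — and choosing the copy-count thresholds so that they survive complementation, restriction to blocks, and iteration with at most polynomial loss. Since all of this is carried out in~\cite{MR4657286}, the economical route, and the one I would take, is simply to quote their characterization and supply \cref{thm:vc} as its input.
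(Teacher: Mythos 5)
Your proposal matches the paper's argument exactly: the paper also deduces \cref{thm:vcviral} by combining \cref{thm:vc} (which, via \cref{lem:vcind}, gives the \erh{} property for the class of $\{H_1,\overline{H_2},J\}$-free graphs) with the Gishboliner--Shapira equivalence from~\cite{MR4657286} between virality and the \erh{} property for this family. The direct counting-regularity sketch in your second paragraph is not needed; as you yourself conclude, quoting that equivalence and supplying \cref{thm:vc} as its input is the route the paper takes.
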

	\section{Tournaments}
	\label{subsec:tour}
	If $Q$ is a tournament, a tournament is {\em $Q$-free} if it has no subtournament isomorphic to $Q$. 
	Say that $Q$ has the {\em \erh{} property} if there exists $c>0$ such that every $n$-vertex $Q$-free tournament admits a transitive subtournament with at least $n^c$ vertices.
	(A warning:  earlier we were using this name applied to hereditary class, and now we are using it for a tournament, not for the class that does not contain it.)
	Alon, Pach, and Solymosi~\cite{MR1832443} proved that \cref{conj:eh} is equivalent to the following statement.
	\begin{conjecture}
		\label{conj:toureh}
		Every tournament has the \erh{} property.
	\end{conjecture}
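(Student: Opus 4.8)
The plan is to route everything through the tournament--graph dictionary of Alon, Pach, and Solymosi~\cite{MR1832443}, which already tells us that \cref{conj:toureh} is \emph{equivalent} to \cref{conj:eh}; so a complete proof would in particular reprove the full \erh{} conjecture for graphs, and the realistic target is to push the bounded-VC machinery of this paper as far as the dictionary allows and to isolate precisely the point where a genuinely new idea is needed. Fix a tournament $Q$. For a linear order $\prec$ on $V(Q)$ let the \emph{backedge graph} $B_\prec(Q)$ be the graph on $V(Q)$ in which $uv$ is an edge exactly when the arc between $u$ and $v$ runs from the $\prec$-larger to the $\prec$-smaller vertex; note that a clique (resp.\ stable set) of a backedge graph of a tournament $T$ is the same as a transitive subtournament of $T$, and that $Q$ is two-colourable precisely when some $B_\prec(Q)$ is bipartite.

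Step 1: list the finitely many backedge graphs of $Q$ obtained over the $|Q|!$ orders of $V(Q)$, and use a pattern-by-pattern refinement of the~\cite{MR1832443} reduction to deduce the tournament \erh{} property for $Q$ from the graph \erh{} property for the hereditary classes obtained by forbidding these backedge graphs, and their complements, as (bi-)induced subgraphs. Step 2: when $Q$ is two-colourable one may take the relevant forbidden graphs to be bipartite graphs, complements of bipartite graphs, and split graphs, so by \cref{lem:vcind} the associated classes have bounded VC-dimension, and \cref{thm:vc} (indeed \cref{thm:vcpolyrodl}) closes the case, with the dictionary returning a transitive subtournament of polynomial size. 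Step 3, the general case: for an arbitrary $Q$ at least one backedge graph is neither bipartite, nor co-bipartite, nor split, the class obtained by forbidding it has unbounded VC-dimension, and one is left to prove the graph \erh{} property for a genuinely arbitrary proper hereditary class. The natural attempt is to rerun the iterative-sparsification scheme of \cref{sec:sparsify} without the VC hypothesis: produce a large weakly-pure blockade (now only via Szemer\'edi regularity), inflate it to a complete or anticomplete blockade by induction on the forbidden pattern as in \cref{lem:vc1}, and finally invoke \cref{lem:eh}.

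The main obstacle is exactly Step 3, and it is the heart of the matter. The bounded-VC hypothesis enters the present proof in one essential place: \cref{thm:ultra}, and hence \cref{thm:cond}, produces a weakly-pure blockade whose length \emph{and} width are polynomial in $\eps$, and this is what makes the auxiliary graph $J$ in \cref{lem:vc1} large enough that the inductive \erh{} bound applied to $J$ yields a clique or stable set of superconstant size --- which is in turn what upgrades an approximately-pure pair to a genuinely complete or anticomplete pair. If ordinary Szemer\'edi regularity replaces the ultra-strong regularity lemma, the blockade has length only a constant depending on $\eps$, so $|J|$ is bounded, the induction on the pattern buys nothing, and the argument collapses precisely where it relied on the hypothesis. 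In other words, \cref{conj:toureh} for all tournaments is, via~\cite{MR1832443}, no easier than \cref{conj:eh} itself, and the route above reaches exactly as far as two-colourable tournaments --- which is what \cref{thm:vc} delivers.
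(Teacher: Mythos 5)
What you were asked to prove is stated in the paper as a \emph{conjecture}, not a theorem: the paper offers no proof of \cref{conj:toureh}, and indeed cannot, since (as both you and the paper note, citing Alon, Pach, and Solymosi~\cite{MR1832443}) it is equivalent to the full \erh{} conjecture for graphs, which is open. Your submission correctly recognises this: it is not a proof but a diagnosis of why the paper's machinery stops at two-colourable tournaments, and as such it is essentially accurate. Your Steps 1 and 2 are exactly the paper's route to \cref{thm:tour} (via \cref{lem:vctour}, \cref{lem:vcind}, and \cref{thm:vc}), and your Step 3 correctly identifies that the general case is out of reach.

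Two small corrections to your account of where the argument breaks without bounded VC-dimension. First, replacing \cref{thm:ultra} by Szemer\'edi's regularity lemma does not make the \emph{length} of the blockade constant --- one can always demand at least $\eps^{-1}$ parts --- rather, it destroys the polynomial lower bound on the \emph{width}, since the number of parts is then only bounded by a tower in $\eps^{-1}$, so the blocks have size $\abs{G}/\operatorname{twr}(\eps^{-1})$ rather than $\eps^{O(1)}\abs G$. Second, and more fundamentally, Szemer\'edi regularity yields $\eps$-regular pairs, not weakly $\eps$-pure ones; for a general graph (e.g.\ a random graph) no equipartition has even a single weakly pure pair, so the conclusion of \cref{thm:cond} is simply false without the VC hypothesis, independently of any quantitative issue. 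So bounded VC-dimension enters in two essential places, not one. With those caveats, your assessment stands: the statement remains a conjecture, and the honest output of this paper's methods is \cref{thm:tour}.
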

	For a tournament $Q$ and an ordering $\phi=(v_1,\ldots,v_n)$ of $V(Q)$, the {\em backedge graph} of $Q$ with respect to $\phi$ 
	is the graph $G$ with vertex set $V(Q)$, where for all $i,j\in[n]$ with $i<j$, $v_iv_j\in E(G)$ if and only if $(v_j,v_i)\in E(Q)$.
	For two tournaments $Q_1,Q_2$, the tournament $Q$ obtained by {\em substituting} $Q_2$ for $v\in V(Q_1)$ has vertex set $V(Q_2)\cup (V(Q_1)\setminus\{v\})$, such that $Q[V(Q_1)\setminus\{v\}]=Q_1\setminus v$, $Q[V(Q_2)]=Q_2$,
	and for all $u\in V(Q_1)\setminus \{v\}$ and $w\in V(Q_2)$, $(u,w)\in E(Q)$ if and only if $(u,v)\in E(Q_1)$.
	A tournament is {\em prime} if it cannot be obtained by substitution from tournaments with fewer vertices.
	By an adaptation of another argument of Alon, Pach, and Solymosi in~\cite{MR1832443},
	\cref{conj:eh} reduces to proving that all prime tournaments have the \erh{} property.
	Partial results in this direction include~\cite
	{MR3323031,MR3354292,MR4587738,MR4651833,MR4529840,density4}; these show the \erh{} property of several types of 
prime tournaments, that all admit a forest backedge graph except those from~\cite{density4}. 
	In another direction, Berger, Choromanski, and Chudnovsky~\cite{MR3862957} proved \cref{conj:toureh} for every six-vertex tournament except for the seven-vertex Paley tournament with one vertex removed.

	For an integer $k\ge0$, a tournament is {\em $k$-colourable} if its vertex set can be partitioned into $k$ subsets each inducing a transitive subtournament.
	It is known (see~\cite[Lemma 3.2]{MR3772736}) that 
	a class of tournaments closed under taking subtournaments does
	not contain all two-colourable tournaments if and only if for some $d$,
	the VC-dimension of the set of in-neighbourhoods of each of its members is at most $d$.
	Moreover, a standard argument together with \cref{lem:vcbip} proves the following (see~\cite[Section 4.6]{2025thes} for a proof).
	\begin{lemma}
		\label{lem:vctour}
		For every $d\ge1$, there exists $m\ge1$ such that for every tournament $T$, if the set of its in-neighbourhoods has VC-dimension at most $d$ then every backedge graph of $T$ has VC-dimension at most $m$.
		Conversely, for every $m\ge1$, there exists $d\ge1$ such that
		if a backedge graph of a tournament $T$ has VC-dimension at most $m$,
		then the set of in-neighbourhoods of $T$ has VC-dimension at most $d$.
	\end{lemma}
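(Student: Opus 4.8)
\emph{Proof plan.} The plan is to reduce the whole lemma to a single identity relating two set-systems, and then invoke the standard closure of bounded VC-dimension under parametrised Boolean combinations. Fix an ordering $\phi=(v_1,\dots,v_n)$ of $V(T)$, let $G$ be its backedge graph, write $N^-_T(v)$ for the set of in-neighbours of $v$ in $T$, and for $v\in V(T)$ let $I_v:=\{u\in V(T):u<_\phi v\}$ be the $\phi$-initial segment strictly below $v$. First I would prove that
\[
N^-_T(v)\;=\;I_v\,\triangle\,N_G(v)\qquad\text{for every }v\in V(T).
\]
This is immediate from the definition of the backedge graph together with the fact that in a tournament exactly one of $(u,v),(v,u)$ is an edge: for $u<_\phi v$ one has $uv\in E(G)\iff u\notin N^-_T(v)$, for $u>_\phi v$ one has $uv\in E(G)\iff u\in N^-_T(v)$, and $v$ lies in neither $N_G(v)$ nor $N^-_T(v)$. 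Since $\triangle$ is self-inverse, the identity may equally be read as $N_G(v)=I_v\triangle N^-_T(v)$. Thus the neighbourhood set-system of $G$ and the in-neighbourhood set-system of $T$ are each obtained from the other by taking, for each parameter $v$, the symmetric difference with the set $I_v$ drawn from the chain $\mathcal I:=\{I_v:v\in V(T)\}$ of $\phi$-initial segments.

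Next I would quote two standard facts. First, a chain of subsets of a set has VC-dimension at most $1$, so $\mathcal I$ has VC-dimension at most $1$. Second, parametrised Boolean combinations preserve bounded VC-dimension: if $\mathcal F_1,\dots,\mathcal F_k$ are set-systems on a common ground set, presented as $\{F_i(p):p\in P\}$ over a common index set $P$ with each $\mathcal F_i$ of VC-dimension at most $e$, and $\beta\colon\{0,1\}^k\to\{0,1\}$ is any Boolean function, then the system whose member at $p$ consists of those ground-set elements $u$ for which $\beta$ evaluates to $1$ on the tuple of memberships of $u$ in $F_1(p),\dots,F_k(p)$ has VC-dimension bounded by a function of $e$ and $k$ alone. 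The proof of the second fact is the usual Sauer--Shelah count: for a set $S$ the trace of the combined system on $S$ factors through the tuple of traces $\bigl(F_1(p)|_S,\dots,F_k(p)|_S\bigr)$, so a shattered $S$ satisfies $2^{\abs S}\le\prod_i\abs{\mathcal F_i|_S}\le(\abs S+1)^{ek}$, which bounds $\abs S$.

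Applying the second fact with $k=2$ to the identity above then settles both directions. For the forward statement, if the in-neighbourhood system of $T$ has VC-dimension at most $d$, then for every ordering $\phi$ the system $\{N_G(v):v\in V(T)\}$ is the pointwise symmetric difference of $\{N^-_T(v):v\in V(T)\}$ and $\mathcal I$, two systems of VC-dimension at most $\max(d,1)$ indexed by the common set $V(T)$; hence it has VC-dimension at most some $m=m(d)$, uniformly in $\phi$, so every backedge graph of $T$ has VC-dimension at most $m$. For the converse, if some backedge graph $G$ of $T$ has VC-dimension at most $m$, the same identity exhibits $\{N^-_T(v):v\in V(T)\}$ as the pointwise symmetric difference of $\{N_G(v):v\in V(T)\}$ with $\mathcal I$, both of VC-dimension at most $\max(m,1)$, giving a bound $d=d(m)$ independent of the ordering. (If preferred, one may instead derive the bound on $G$ through \cref{lem:vcbip}, since for a graph ``bounded VC-dimension'' is the same as ``omitting a bi-induced copy of some fixed bigraph''; but this detour is unnecessary.)

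The one point requiring care --- and the reason the second fact must be used in its parametrised form rather than the off-the-shelf statement that a Boolean combination of finitely many VC classes is a VC class --- is that $I_v$ varies with the parameter $v$: we are not combining two fixed set-systems, but rather combining, index by index, the set $N_G(v)$ with the segment $I_v$ attached to the same $v$. Once this parametrised statement is available, the only remaining work is the verification of the symmetric-difference identity and the routine bookkeeping of which vertex lies below which in $\phi$, so I expect no further obstacle.
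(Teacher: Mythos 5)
Your proof is correct. The symmetric-difference identity $N^-_T(v)=I_v\triangle N_G(v)$ is verified exactly as you say (for $u<_\phi v$, $uv\in E(G)\iff(v,u)\in E(T)\iff u\notin N^-_T(v)$; for $u>_\phi v$, $uv\in E(G)\iff(u,v)\in E(T)\iff u\in N^-_T(v)$; and $v$ lies in none of the three sets), the chain of initial segments has VC-dimension at most $1$, and the parametrised-Boolean-combination bound you quote follows from the Sauer--Shelah count as you describe, with the crucial point that the bound depends only on $d$, $k$, and the per-coordinate VC-dimensions, not on the ordering $\phi$. Both directions then fall out by symmetry of $\triangle$, and you correctly observe that the forward direction needs uniformity in $\phi$ while the converse needs only one witnessing $\phi$.

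The paper leaves this lemma to ``a standard argument together with \cref{lem:vcbip},'' which hints at a route through the excluded-bi-induced-pattern characterisation of bounded VC-dimension: one would translate a forbidden bigraph in the backedge graph into a forbidden tournament pattern (or vice versa) and pass through the corresponding bound on in-neighbourhood VC-dimension. Your route bypasses the pattern characterisation entirely and works directly at the level of set-systems; it is shorter, gives explicit bounds $m(d)$ and $d(m)$ from the Sauer--Shelah inequality $2^{\abs S}\le(\abs S+1)^{2\max(d,1)}$, and makes the uniformity in $\phi$ manifest because the only $\phi$-dependent ingredient, the chain $\mathcal I$, has VC-dimension $1$ regardless of $\phi$. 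The only thing the pattern-based route buys that yours does not is the ready-made bridge to the forbidden-subtournament formulation used in \cref{thm:tour}, but that bridge is supplied separately by the reference to~\cite{MR3772736}, so nothing is lost. Your explicit flag that the off-the-shelf ``Boolean combination of two fixed VC classes'' statement is not quite what is needed, and that the parametrised version is required because $I_v$ varies with $v$, is precisely the right point of care.
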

	Therefore, since every backedge graph of every $n$-vertex transitive tournament is a comparability graph (and so has a clique or stable set of size at least $\sqrt n$), \cref{thm:vc} can be restated as follows.
	\begin{theorem}
		\label{thm:tour}
		Every two-colourable tournament has the \erh{} property.
	\end{theorem}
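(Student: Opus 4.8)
The plan is to obtain \cref{thm:tour} as a direct consequence of \cref{thm:vc}, by reading the chain of reductions recorded just before the statement from graphs to tournaments rather than the other way around.

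Fix a two-colourable tournament $Q$, and let $\mathcal{T}$ denote the class of $Q$-free tournaments; the aim is a single constant $c=c(Q)>0$ such that every $T\in\mathcal{T}$ has a transitive subtournament on at least $\abs{T}^{c}$ vertices. Since $Q$ is two-colourable, the characterization cited above (\cite[Lemma 3.2]{MR3772736}), via a Sauer--Shelah-type argument, provides an integer $d=d(Q)$ such that for every $T\in\mathcal{T}$ the set of in-neighbourhoods of $T$ has VC-dimension at most $d$. Applying the first part of \cref{lem:vctour}, there is an integer $m=m(d)$ so that every backedge graph of every $T\in\mathcal{T}$ has VC-dimension at most $m$. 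Now \cref{thm:vc}, applied with $m$ in place of $d$, yields a constant $c=c(m)>0$; I claim this $c$ works. Indeed, let $T\in\mathcal{T}$ with $n:=\abs{T}$, pick any ordering $\phi=(v_1,\ldots,v_n)$ of $V(T)$, and let $G$ be the backedge graph of $T$ with respect to $\phi$, so that $G$ has VC-dimension at most $m$. By \cref{thm:vc} there is a clique or stable set $S$ of $G$ with $\abs{S}\ge n^{c}$. If $S$ is stable in $G$, then for all $v_i,v_j\in S$ with $i<j$ we have $v_iv_j\nin E(G)$, that is, $(v_i,v_j)\in E(Q)$, so $\phi$ restricted to $S$ is a transitive ordering of $T[S]$; if $S$ is a clique in $G$, then symmetrically every edge of $T[S]$ runs backwards along $\phi$ and the reverse of $\phi|_S$ is a transitive ordering of $T[S]$. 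In either case $T$ has a transitive subtournament on at least $n^{c}$ vertices, and since $c$ depends only on $Q$, this shows $Q$ has the \erh{} property; as $Q$ was an arbitrary two-colourable tournament, the theorem follows.

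I do not anticipate a real obstacle: the substance of the argument is entirely in \cref{thm:vc}, and what remains is the bookkeeping dictionary between a tournament, an ordering of its vertex set, and the associated backedge graph, together with two off-the-shelf inputs (\cite[Lemma 3.2]{MR3772736} and \cref{lem:vctour}) that keep all VC-dimension bounds uniform over the class. The two points deserving a line of care are (i) verifying that $d(Q)$ and $m(d)$ really depend only on $Q$, and not on the particular $T$, so that the resulting exponent $c$ is a genuine constant for $\mathcal{T}$; and (ii) noting that \emph{both} cliques and stable sets of the backedge graph give transitive subtournaments, which is exactly what allows the ``clique or stable set'' conclusion of \cref{thm:vc} to be used without any loss.
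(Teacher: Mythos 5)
Your argument is correct and follows essentially the same route as the paper, which presents \cref{thm:tour} as a restatement of \cref{thm:vc} obtained via the cited characterization of two-colourability in terms of VC-dimension of in-neighbourhoods together with \cref{lem:vctour}, and the observation that a clique or stable set in a backedge graph of $T$ yields a transitive subtournament. (One trivial slip: in the stable-set case, $(v_i,v_j)\in E(Q)$ should read $(v_i,v_j)\in E(T)$.)
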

Tournaments that admit forest backedge graphs are two-colourable, so this
	result contains all of the aforementioned results on prime tournaments with the \erh{} property, except those from~\cite{density4};
	and since all six-vertex tournaments are two-colourable (see~\cite{MR1310883}), \cref{thm:tour} implies that they all satisfy \cref{conj:toureh} as well. 
	One can also formulate a viral version for tournaments similar to the one for graphs and prove that every two-colourable tournament is viral; we omit the details.
	
	\section*{Acknowledgements}
	We would like to thank Artem Chernikov and Jacob Fox for helpful discussions.
	We are also grateful to the anonymous referees for helpful remarks.
	
	\bibliographystyle{abbrv}

\end{document}